\documentclass[a4paper]{article}
\usepackage{amssymb,amsthm,amsmath,latexsym,tikz,setspace,enumerate}

\newtheorem{theorem}{Theorem}

\newtheorem{lemma}[theorem]{Lemma}
\newtheorem{corollary}[theorem]{Corollary}

\title{Parity linkage and the Erd\H{o}s-P\'osa property of odd cycles through prescribed vertices in highly connected graphs}

\author{Felix Joos}


\begin{document}


\date{}

\maketitle



\begin{abstract}
We show the following for every sufficiently connected graph $G$, any vertex subset $S$ of $G$, 
and given integer $k$:
there are $k$ disjoint odd cycles in $G$ each containing a vertex of $S$
or there is set $X$ of at most $2k-2$ vertices such that $G-X$ does not contain any odd cycle that contains a vertex of $S$.
We prove this via an extension of Kawarabayashi and Reed's result  about
parity-$k$-linked graphs (Combinatorica 29, 215-225).
From this result it is easy to deduce several other well known results
about the Erd\H{o}s-P\'osa property of odd cycles in highly connected graphs.
This strengthens results due to Thomassen (Combinatorica 21, 321-333), and Rautenbach and Reed (Combinatorica 21, 267-278), respectively.
\end{abstract}

{\small \textbf{Keywords:}  cycles, packing, covering}\\
\indent {\small \textbf{AMS subject classification:}
05C70, 

}

\section{Introduction}
We consider only finite and simple graphs.
A family $\mathcal{F}$ of graphs has the \textit{Erd\H{o}s-P\'osa property} if there is a function $f_\mathcal{F}:\mathbb{N}\rightarrow \mathbb{N}$
such that for every positive integer $k$ and every graph $G$,
the graph $G$ contains $k$ disjoint subgraphs from $\mathcal{F}$
or there is a set $X$ of vertices of $G$ with $|X|<f_\mathcal{F}(k)$ such that $G-X$ contains no subgraph from $\mathcal{F}$.
This notion has been introduced because Erd\H{o}s and P\'osa proved
that the family of cycles has the Erd\H{o}s-P\'osa property \cite{EP62}.
It is one facet of the duality between packing and covering in graphs, which is one of the most fundamental concepts in graph theory.
There is a huge number of results about families of graphs which have the Erd\H{o}s-P\'osa property.
For example, Birmel{\'e}, Bondy, and Reed \cite{BBR07} verified it for the family of cycles of length at least $\ell$ for some integer~$\ell$
and Robertson and Seymour \cite{RS86} showed it for the family of graphs that contain a fixed planar graph as a minor.

In contrast, the family of odd cycles does not have the Erd\H{o}s-P\'osa property.
In particular, there is a sequence of graphs $(G_n)_{n\in \mathbb{N}}$ such that $G_n$ does not contain two disjoint odd cycles,
all odd cycles are of length $\Omega(\sqrt{n})$, 
and every set that intersects all odd cycles has cardinality at least $\Omega(\sqrt{n})$ \cite{Ree99}.

However, Thomassen \cite{Tho01} proved that the family of odd cycles has the Erd\H{o}s-P\'osa property if we restrict ourselves to graphs with high connectivity.
\mbox{Rautenbach} and Reed~\cite{RR01} improved Thomassen's connectivity bound from a double-exponential to linear one.
Later, Kawarabayashi and Reed \cite{KR09} lowered this bound to $24k$, and
Kawarabayashi and Wollan~\cite{KW06} improved this further to $\frac{31}{2}k$.

More than 50 years ago, Dirac \cite{Dir60} showed that
in every $k$-connected graph~$G$, there is a cycle containing any prescribed set of $k$ vertices.
Later, Bondy and Lov{\'a}sz \cite{BL81} extended Dirac's result and proved among other results along this line
that for every $k$-connected non-bipartite graph $G$,
there is an odd cycle containing any prescribed set of $k-1$ vertices.

If one asks for many disjoint cycles through a prescribed set $S$ of vertices
it is natural to start with disjoint cycles each containing at least one element of $S$.
We call such cycles  \emph{$S$-cycles}.
Pontecorvi and Wollan \cite{PW12} showed that the class $\mathcal{C}_S$ of $S$-cycles has the Erd\H{o}s-P\'osa property
with $f_{\mathcal{C}_S}(k)=O(k\log k)$, 
which improved the quadratic bound from \cite{KKM11}.
Bruhn et al.~\cite{BJS14} proved that the class $\mathcal{C}_\ell$ of all $S$-cycles of length at least $\ell$ has the Erd\H{o}s-P\'osa property
with $f_{\mathcal{C}_\ell}(k)=O(\ell k \log k)$.
For $S=V(G)$, these results yield the Erd\H{o}s-P\'osa property for cycles and cycles of length at least $\ell$, respectively.

Although, the Erd\H{o}s-P\'osa property does not hold for odd cycles,
it is proved in \cite{KK13} that a half-integral version for the Erd\H{o}s-P\'osa property of odd $S$-cycles holds.
This generalizes a result of Reed \cite{Ree99}, who proved the case $S=V(G)$.

In this paper we continue the study of $S$-cycles by showing the following theorem.
We say a set of vertices $X$ is an \emph{odd cycle cover} and an \emph{odd $S$-cycle cover} of $G$ 
if $G-X$ is bipartite and if $G-X$ does not contain an odd $S$-cycle, respectively.
As mentioned above, the results in \cite{KR09,KW06,RR01}
show that linear connectivity ensures that 
a graph has $k$ vertex disjoint odd cycles or an odd cycle cover of size $2k-2$. 
We show that a sufficiently connected graph 
has $k$ vertex disjoint odd $S$-cycles for any prescribed vertex set $S$ or has an odd $S$-cycle cover of size $2k-2$.
Furthermore, if $S$ has size at least $k$, then in the latter case the graph has also an odd cycle cover of size $3k-3$. 
The bound of $2k-2$ and $3k-3$ is tight for any connectivity, respectively.

\begin{theorem}\label{EPoddScycles}
For any integer $k$, any $50k$-connected graph $G$, and any subset $S$ of vertices of $G$,
at least one of the following statements hold:
\begin{enumerate}[(i)]
	\item $G$ contains $k$ disjoint odd $S$-cycles.
	\item There is a set $X$ with $|X|\leq 2k-2$ such that $G-X$ does not contain an odd $S$-cycle
	and if $|S|\geq k$, there is a set $Y$ with $|Y|\leq 3k-3$ such that $G-X$ is bipartite.
\end{enumerate}
\end{theorem}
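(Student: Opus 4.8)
The plan is to prove Theorem~\ref{EPoddScycles} by induction on $k$, with the parity-linkage extension serving as the engine that manufactures odd $S$-cycles. First I would dispose of the case $|S|<k$: any family of disjoint odd $S$-cycles uses pairwise distinct vertices of $S$, so there are at most $|S|\le k-1$ of them and (i) cannot hold; but then $X=S$ is an odd $S$-cycle cover with $|X|\le k-1\le 2k-2$, and the bipartite clause is vacuous. Hence I may assume $|S|\ge k$, which guarantees $k$ distinct terminals in $S$ and puts both parts of (ii) in force.

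The core construction builds the $k$ disjoint odd $S$-cycles directly. I would choose distinct $s_1,\dots,s_k\in S$ and, for each $i$, two neighbours $a_i,b_i$ of $s_i$; an odd path from $a_i$ to $b_i$ in $G-\{s_1,\dots,s_k\}$ closes up through $s_i$ to an odd cycle meeting $S$. Deleting the $k$ chosen vertices leaves a $49k$-connected graph, ample room to invoke the parity-$k$-linked machinery on the $2k$ terminals $a_1,b_1,\dots,a_k,b_k$ with every parity prescribed odd. If the linkage succeeds, the $k$ internally disjoint odd paths yield $k$ disjoint odd $S$-cycles and we land in case (i).

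The dichotomy comes from the parity-linkage extension itself: a sufficiently connected graph is either parity-$k$-linked or carries a bounded odd cycle cover certifying near-bipartiteness. If the construction fails for all admissible terminals, I would read off such a cover and sharpen it. For the odd $S$-cycle cover I aim for exactly $2k-2$, matching a maximal packing of $k-1$ disjoint odd $S$-cycles. For the bipartite clause, the key observation is that after deleting such an $X$ every remaining odd cycle avoids $S$; since $|S|\ge k$ exceeds the packing number, the high connectivity of $G-X$ lets me reroute, and thereby charge, each residual odd cycle to a private vertex of $S$, so that $k-1$ further vertices suffice and $|Y|\le 3k-3$. In the inductive formulation, peeling off one odd $S$-cycle costs two cover vertices (three for full bipartiteness), which is precisely how the constants $2k-2$ and $3k-3$ propagate.

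The main obstacle lies in the passage from packing to covering with no numerical slack. Because the bounds $2k-2$ and $3k-3$ are asserted tight, the failure of parity-$k$-linkage must be converted into an odd cycle cover of exactly the right size, not merely $O(k)$; this forces a precise structural description of the near-bipartite case, controlling how each absent disjoint odd $S$-cycle contributes to the cover. Equally delicate are the reroutings that turn $S$-avoiding odd cycles into odd $S$-cycles for the bipartite clause: these must be realized simultaneously, disjointly, and with controlled parity, and it is exactly here that the extension to \emph{parity} linkage, rather than ordinary linkage, is indispensable. Finally, one must verify that $50k$-connectivity survives the deletion of the $k$ terminals and still meets the hypotheses of the linkage lemma; this is the routine but exacting bookkeeping that fixes the constant.
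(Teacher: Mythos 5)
Your proposal correctly disposes of the case $|S|<k$ and correctly identifies the overall engine (a parity-linkage theorem used to manufacture odd cycles through prescribed vertices: your construction with two neighbours $a_i,b_i$ of $s_i$ and an odd $a_i$--$b_i$ path is close in spirit to the paper's Lemma~\ref{lemma erdos posa}). But there is a genuine gap at precisely the point you flag as ``the main obstacle,'' and the devices you sketch to close it would fail. If you invoke the off-the-shelf parity-linkage theorem (Theorem~\ref{parity-k-linkage}) on $G-\{s_1,\dots,s_k\}$, its failure alternative is an odd cycle cover of size $4k-4$ in that graph, hence a cover of size about $5k-4$ in $G$ --- nowhere near $2k-2$. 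Your two sharpening ideas do not repair this. First, the induction in which ``peeling off one odd $S$-cycle costs two cover vertices'' cannot be run: an odd $S$-cycle may be arbitrarily long, so deleting its vertex set destroys the $50(k-1)$-connectivity needed to apply the induction hypothesis. Second, ``rerouting and charging each residual odd cycle to a private vertex of $S$'' for the $3k-3$ clause is not an argument: $G-X$ may contain unboundedly many odd cycles avoiding $S$, and covering them all with $k-1$ further vertices is itself an Erd\H{o}s--P\'osa-type assertion, not a consequence of connectivity.

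The paper closes this gap by reversing the direction of the dichotomy. It does not try to extract a cover from a failed linkage; instead it \emph{starts} from a minimum odd cycle cover $X$. If $|X|\le 2k-2$, case (ii) holds outright. If $|X|\ge 2k-1$, observation \eqref{vertex cover occ} gives $\tau(G_{A,B})=|X|\ge 2k-1$ for a nice partition $(A,B)$, so by \eqref{vertex cover matching} the graph $G_{A,B}$ contains a matching of size $k$ --- and this matching is exactly the ``parity breaking matching'' hypothesis of the paper's refined linkage theorem (Theorem~\ref{parity linked fixed set}), which, unlike Kawarabayashi--Reed's theorem, is conditioned on such a matching rather than on the absence of a $(4k-4)$-cover. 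That refinement is what makes the constant $2k-2$ attainable. The odd $S$-cycles themselves are then produced by a twin-vertex trick (add $s_i'$ with $N(s_i')=N(s_i)$ and request disjoint \emph{odd} $s_i$--$s_i'$ paths, applying Theorem~\ref{parity linked fixed set} twice to transfer the matching to a nice partition of the augmented graph), and the $3k-3$ bound comes from running the same duality argument on $G_{A,B}-Y$ with $Y$ a minimum vertex cover of $G[S']$ (Theorem~\ref{S-cycles vertex}), not from any rerouting of residual cycles. In short: without the refined linkage theorem and the vertex-cover/matching duality on $G_{A,B}$, the tight constants in the statement are out of reach of your outline.
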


Observe that the choice $S=V(G)$ implies the results in~\cite{KR09,KW06,RR01}.
In fact, we prove more detailed results than Theorem \ref{EPoddScycles}.
It is not difficult to see that there are arbitrarily highly connected graphs that contain $k$ disjoint odd ($S$-)cycles and an odd ($S$-)cycle cover of size less than $2k-2$.
In this paper, we present an equivalent condition for $50k$-connected graphs for having $k$ disjoint odd ($S$-)cycles
and deduce the known Erd\H{o}s-P\'osa-type result from this result.

We say a graph is $k$-\emph{linked} if for every set of $k$ pairs of distinct vertices $\{\{s_1,t_1\},\ldots,\{s_k,t_k\}\}$,
there are disjoint paths $P_1,\ldots,P_k$ such that $P_i$ connects $s_i$ and $t_i$.
Moreover,
a graph is \emph{parity-$k$-linked} if it is $k$-linked and we can additionally specify 	 
whether the length of each $P_i$ should be odd or even individually for every $1\leq i\leq k$.

There are several results stating that
if $G$ is $g_1(k)$-connected,
then $G$ is $k$-linked.
The best such result is due to Thomas and Wollan \cite{TW05} who proved that $g_1(k)=10k$ suffices.
They even proved the following stronger result.
\begin{theorem}[\cite{TW05}]\label{k-linkage2}
Every $2k$-connected graph $G$ with at least $5k|V(G)|$ edges is $k$-linked.
\end{theorem}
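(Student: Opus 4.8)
The plan is to argue by contradiction through a counterexample that is minimal first with respect to $|V(G)|$ and then, subject to that, with respect to $|E(G)|$. So suppose $G$ is $2k$-connected, satisfies $|E(G)|\geq 5k|V(G)|$, but is not $k$-linked, and fix terminal pairs $\{s_1,t_1\},\dots,\{s_k,t_k\}$ for which no disjoint linkage exists, chosen together with $G$ so that $|V(G)|$ is smallest possible. Write $T=\{s_1,t_1,\dots,s_k,t_k\}$ for the set of at most $2k$ terminals. The guiding idea is that the density bound is exactly the amount of ``surplus'' needed to survive the local reductions by which one simplifies $G$, while the connectivity bound is exactly what is needed to attach the $2k$ terminals to a common well-connected core.

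The first block of the argument extracts structure from minimality. I would first show that every vertex $v\notin T$ has degree at least $5k$: deleting a vertex of smaller degree removes fewer than $5k$ edges, so $G-v$ still satisfies $|E(G-v)|\geq 5k|V(G-v)|$, and if $G-v$ were still $2k$-connected then minimality would give a linkage in $G-v\subseteq G$, a contradiction. The remaining possibility is that $\{v\}$ together with a $(2k-1)$-set separates $G$; this separator case must be handled directly, by rerouting a linkage across the separator through $v$. A parallel argument over contractions shows that $G$ admits no edge $uv$ (with $u,v$ not both terminals) whose contraction preserves both $2k$-connectivity and the density invariant, since contracting such an edge and lifting a linkage in the quotient would again contradict minimality. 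The outcome of this block is a graph with high minimum degree and no harmless contractible edge.

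With a large minimum degree forced, the second block produces inside $G$ a structure through which all $2k$ terminals can be routed simultaneously: one attaches the terminals to distinct branch sets of a large (near-)complete core by disjoint paths (a fan/Menger argument using $2k$-connectivity), and inside a complete-minor core one freely joins the branch set of each $s_i$ to that of $t_i$. The main obstacle is precisely the \emph{linear} edge bound $5k|V(G)|$: the generic theorems guaranteeing a $K_{\Theta(k)}$ minor demand average degree of order $k\sqrt{\log k}$, which would only yield a density requirement of the shape $ck\sqrt{\log k}\,|V(G)|$. Matching the linear bound therefore forbids routing through a black-box minor and forces a hands-on construction in which the edges consumed at each inductive step are charged against the global surplus: each vertex deletion costs at most $5k$ edges, exactly what the invariant $|E(G)|\geq 5k|V(G)|$ can absorb per vertex. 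Controlling the interaction between \emph{simultaneously} maintaining $2k$-connectivity and maintaining the density bound throughout these reductions is the delicate heart of the proof, and is where I expect the real difficulty to lie.
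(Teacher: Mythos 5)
This statement is not proved in the paper at all: it is Thomas and Wollan's theorem, imported verbatim from \cite{TW05} and used as a black box (both in the proof of Theorem \ref{parity linked fixed set} and via Lemma \ref{connected2}). So there is no internal proof to compare against, and your attempt has to be judged as a proof of the Thomas--Wollan theorem itself. As such, it is a strategy outline rather than a proof, and the two places where you defer the work are precisely where the entire content of \cite{TW05} lies. First, in the minimal-counterexample reductions you say the case where deleting a low-degree vertex (or contracting an edge) destroys $2k$-connectivity ``must be handled directly, by rerouting a linkage across the separator'' --- but handling small separators in a linkage problem is the hard part: a $2k$-separator need not interact nicely with the $2k$ terminals, and Thomas and Wollan have to set up a stronger inductive statement (linkage problems on graphs that are no longer $2k$-connected, with an edge bound that degrades in a controlled way across what they call rigid separations) exactly because the na\"{\i}ve rerouting you invoke does not go through. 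Second, your final block asserts that since a $K_{3k}$-minor is unavailable at linear density, one instead performs ``a hands-on construction in which the edges consumed at each inductive step are charged against the global surplus,'' and you explicitly flag this as ``where I expect the real difficulty to lie.'' That conceded difficulty \emph{is} the theorem; nothing in the proposal indicates how to actually route the $k$ paths once the reductions stall.

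There is also a concrete technical error in the contraction step: contracting an edge $uv$ does not merely lose the edge $uv$; it loses one edge for every common neighbor of $u$ and $v$, since parallel edges are identified. Hence one cannot simply posit edges ``whose contraction preserves both $2k$-connectivity and the density invariant'' and derive a contradiction from minimality; in a dense graph the typical situation is that \emph{every} contraction violates the density bound, and the useful conclusion one extracts from that (every edge lies in many triangles, so neighborhoods are dense) requires a separate argument that you have not supplied. In short: the skeleton (minimal counterexample, degree and contraction reductions, then a dense-case routing) points in the same general direction as the literature, but everything that makes the theorem true is missing.
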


There are also results of the form
if $G$ is $g_2(k)$-connected and without an odd cycle cover of size $4k-4$,
then $G$ is parity-$k$-linked.
In particular, Kawarabayashi and Reed \cite{KR09} proved the following.
\begin{theorem}[\cite{KR09}]\label{parity-k-linkage}
Every $50k$-connected graph without an odd cycle cover of size $4k-4$ is parity-$k$-linked.
\end{theorem}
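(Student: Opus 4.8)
The plan is to prove Theorem~\ref{parity-k-linkage} by showing that high connectivity together with the \emph{absence} of a small odd cycle cover forces the kind of structural richness needed to realize prescribed parities on linking paths. The starting point is that a $50k$-connected graph is certainly $10k$-connected, hence $k$-linked by the Thomas--Wollan bound (Theorem~\ref{k-linkage2}); so the genuine content is the \emph{parity} refinement. Concretely, given terminal pairs $\{s_i,t_i\}$ with prescribed parities, I would first fix a system of disjoint $s_i$--$t_i$ paths $P_1,\ldots,P_k$ coming from $k$-linkedness, and then argue that the parities of these paths can be \emph{corrected} one at a time by rerouting a path through a suitable odd structure. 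The key observation is that an odd cycle cover of size $4k-4$ being \emph{absent} is exactly the hypothesis that guarantees, after we have already committed up to $k-1$ disjoint paths and possibly deleted a bounded number of vertices, that the remaining graph still contains an odd cycle disjoint from the committed paths whose parity we can exploit.

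The core mechanism I would use is a \emph{parity switch}: if $P_i$ has the wrong parity, I reroute it through an odd cycle (or more precisely through a connection to and from an odd cycle) that is vertex-disjoint from all the other paths. Traversing into and out of an odd structure changes the parity of the resulting $s_i$--$t_i$ walk, and with enough connectivity one can extract an actual path of the opposite parity without disturbing the other terminals. To make this rigorous I would set up a reservoir argument: using the large connectivity, reserve a ''highly connected non-bipartite core'' into which each path can be diverted. The crucial counting is that correcting all $k$ parities consumes the structure provided by the hypothesis; the threshold $4k-4$ on the forbidden cover size (roughly $4$ per pair, $2$ for the two ''attachment'' points on each of the two path-endpoints or for the two ends of a switch) is what makes the bookkeeping close. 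I would track how many vertices each correction step deletes or commits, and verify that after processing all $k$ pairs the accumulated deletions never create an odd cycle cover of size $4k-4$, which would contradict the hypothesis precisely when the core were bipartite.

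The main obstacle, and where the real work lies, is ensuring that the parity corrections are \emph{simultaneously} achievable rather than merely one at a time: rerouting $P_i$ must not destroy the already-fixed parities of $P_1,\ldots,P_{i-1}$ nor consume the odd structure needed for $P_{i+1},\ldots,P_k$. Handling this requires a careful disjointness invariant and a ''sufficient supply of disjoint odd augmenters'' lemma, whose proof is exactly where the constant $50k$ (rather than something closer to the $10k$ of plain linkage) gets spent: connectivity $50k$ leaves ample room to find, at each of the $k$ stages, a fresh highly connected odd substructure avoiding everything already used. I expect the argument to proceed by induction on $k$, where the inductive step deletes the terminals and a bounded neighborhood of one completed path and applies the hypothesis (non-existence of a cover of size $4k-4$) to the residual graph to guarantee that it is still both sufficiently connected and non-bipartite with a slightly smaller cover threshold; the delicate point is verifying that the deletion reduces the forbidden-cover parameter by at most $4$, so that the inductive hypothesis applies cleanly.
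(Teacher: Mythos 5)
Your proposal has two genuine gaps, and they sit exactly where the real difficulty of the theorem lies. First, the inductive scheme does not close. You propose to fix the parity of one pair, then delete ``the terminals and a bounded neighborhood of one completed path'' and apply the hypothesis to the residual graph with the forbidden-cover parameter reduced by at most $4$. But to keep the remaining paths disjoint from $P_i$ you must delete all of $P_i$, which is an \emph{unbounded} vertex set: after such a deletion the residual graph need not be $50(k-1)$-connected (or even $2$-connected), and an odd cycle cover of the residual graph tells you nothing about odd cycle covers of $G$, since adding back an unbounded set of path vertices is not a bounded-size correction. So the bookkeeping ``each step costs at most $4$'' cannot be made to work on $G$ itself. (It is worth noting that a ``$4$ per pair'' induction \emph{does} appear in the paper, but in a completely different place: one shows that any graph $H$ with $\tau(H)\geq 4k-3$ contains a matching $m_1,\dots,m_k$ with $m_i\cap\{x_j,y_j\}=\emptyset$ for $i\neq j$, by deleting the four vertices $x_k,y_k,r,r'$ and inducting. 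That induction lives in the auxiliary graph $G_{A,B}$ of ``bipartition-violating'' edges of a nice partition, where $\tau(G_{A,B})$ equals the minimum odd cycle cover size, not in $G$.)

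Second, your ``sufficient supply of disjoint odd augmenters'' lemma is asserted rather than proved, and in the naive form you need it is false: under the hypothesis, the minimum odd cycle cover may have size exactly $4k-3$, so \emph{all} odd structure in $G$ passes through a set of $4k-3$ vertices. Paths committed greedily in earlier stages can swallow or separate this entire set, leaving no odd augmenter for later pairs; no amount of connectivity prevents this, because the obstruction is the near-bipartiteness of $G$, not a lack of connectivity. The proofs that work (Kawarabayashi--Reed, and Theorem~\ref{parity linked fixed set} in this paper) avoid iteration entirely and route all $k$ paths \emph{simultaneously} through a single reserved core, after a dichotomy on the size of a minimum odd cycle cover $X$: if $|X|<8k$, one routes inside the highly connected \emph{bipartite} graph $G-(T\cup X)$, where parity is rigid, and flips parity only via a prescribed system of $k$ disjoint edges inside the partition classes (a parity breaking matching of a nice partition) -- this is the object whose existence the vertex-cover induction above supplies; if $|X|\geq 8k$, one finds a highly connected bipartite subgraph $H$ via Mader plus Thomas--Wollan, invokes the Geelen et al.\ Erd\H{o}s--P\'osa theorem for odd $Z$-paths to get $3k$ disjoint odd $H$-paths as switchers, and uses the $(k+|I|)$-linkedness of $H$ to do all the routing at once. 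Your plan would need to be restructured around such a global simultaneous routing (and around the nice-partition/matching formalism that makes ``parity switching'' quantifiable) before it could become a proof.
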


The condition of having no small odd cycle cover is necessary and best possible --
there are graphs of arbitrarily high connectivity and with an odd cycle cover of size $4k-4$
that are not parity-$k$-linked.
For example, consider a large complete bipartite graph $G$ with bipartition $(A, B)$
where we add to $A$ the edges of a clique on $2k-1$ vertices
and we add to $B$ the edges of a clique on $2k$ vertices $\{s_1,\ldots,s_k,t_1,\ldots,t_k\}$ minus the perfect matching $\{s_1t_1,\ldots,s_kt_k\}$.

One can apply Theorem \ref{parity-k-linkage} almost directly to deduce that every $50k$-connected graph $G$
without an odd cycle cover of size $4k-4$ has $k$ disjoint odd $S$-cycles for any set $S$ of at least $k$ vertices.
However, the bound on the size of the odd cycle cover is not optimal.
In this paper we prove a stronger version of Theorem~\ref{parity-k-linkage},
reprove the Erd\H{o}s-P\'osa property for odd cycles for $50k$-connected graphs,
and as the main result of this paper,
we prove Theorem \ref{EPoddScycles}.

In addition, we prove several results on the way that may be of independent interest.

The paper is organized as follows. 
In Section 2 we deal with the results concerning the parity-$k$-linkage and
in Section 3 we prove the results about the Erd\H{o}s-P\'osa property for odd $S$-cycles.

\section{Highly parity linked graphs}

In the next theorem we explicitly characterize the obstruction 
for a $50k$-connected graph and a set $\{\{s_1,t_1\},\ldots,\{s_k,t_k\}\}$ of $k$ pairs of distinct vertices
for not having $k$ disjoint $P_1,\ldots,P_k$ paths of prescribed length parity where $P_i$ connects $s_i$ and $t_i$.

Before we state the theorem, we introduce some definitions.
A \emph{partition} $(A,B)$ of $G$ is partition of the vertex set of $G$ into two sets $A$ and $B$.
For a partition $(A, B)$ of $G$, we denote by $G_{A,B}$ the graph $G[A] \cup G[B]$.
A partition $(A,B)$ of $G$ is a \emph{bipartition} if $G_{A,B}$ is edgeless.
A partition $(A,B)$ of $G$ is \emph{nice} 
if there is a minimum odd cycle cover $X$ of $G$
for which $(A\setminus X,B\setminus X)$ is a bipartition of $G-X$ 
such that a vertex of $X$ is in $A$ (respectively, $B$) if it has more neighbors in $B\setminus X$ than in $A\setminus X$
(respectively, more neighbors in $A\setminus X$ than in $B\setminus X$).
We say that a minimum odd cycle cover $X$ \emph{induces} some nice partition $(A,B)$ of $G$ if 
$(A\setminus X,B\setminus X)$ is a bipartition such that a vertex of $X$ is in $A$ (respectively, $B$) if it has more neighbors in $B\setminus X$ than in $A\setminus X$
(respectively, more neighbors in $A\setminus X$ than in $B\setminus X$).
Note that every minimum odd cycle cover induces a nice partition.

Let $(A,B)$ be nice partition of $G$ and let $S=\{\{s_1,t_1\},\ldots,\{s_k,t_k\}\}$ be a set of $k$ pairs distinct vertices.
Let $I\subseteq [k]$ be a set of integers.
A \emph{parity breaking matching} for $(S,I)$ (with respect to the partition $(A,B)$) is a matching $M=\{m_i\}_{i\in I}$ such that $M\subseteq E(G_{A,B})$ 
and $m_i\cap \{s_j,t_j\}=\emptyset$, for $i\in I$ and $i\neq j\in [k]$.
If $I=[k]$, we also say $M$ is a parity breaking matching for $S$.

\begin{theorem}\label{parity linked fixed set}
Let $k\in \mathbb{N}$ and let $I\subseteq [k]$.
Let $G$ be a $(26k+24|I|)$-connected graph and let $S=\{\{s_1,t_1\},\ldots,\{s_k,t_k\}\}$ be a set of $k$ pairs of distinct vertices.
If there is a nice partition $(A,B)$ of $G$ with a parity breaking matching $M$ for $(S,I)$,
then $G$ contains $k$ disjoint paths $P_1,\ldots,P_k$ such that $P_i$ connects $s_i$ and $t_i$
and for $i\in I$, we can individually prescribe the parity of the length of $P_i$.
\end{theorem}

Let us make the following observation.
Suppose $G$ is a graph and $S=\{\{s_1,t_1\},\ldots,\{s_k,t_k\}\}$ is a set of $k$ pairs of distinct vertices.
Suppose $G$ contains $k$ disjoint paths $P_1,\ldots,P_k$ such that $P_i$ connects $s_i$ and $t_i$
and for $i\in I$, we can individually prescribe the parity of the length of $P_i$.
Let $(A,B)$ be any partition of $G$.
Let $P_1,\ldots,P_k$ be disjoint paths
where $P_i$ is a $s_i,t_i$-path and for $i\in I$, we choose the parity of $P_i$ to be even if exactly one vertex of $\{s_i,t_i\}$ belongs to $A$ and odd otherwise.
Thus for $i\in I$, the path $P_i$ contains at least one edge $m_i$ in $E(G_{A,B})$.
Therefore, $\{m_i\}_{i\in I}$ is a parity breaking matching for $(S,I)$.
This leads to the following corollary.

\begin{corollary}\label{cor: nice partition}
Let $k\in \mathbb{N}$ and let $I\subseteq [k]$.
Let $G$ be a $(26k+24|I|)$-connected graph and let $S=\{\{s_1,t_1\},\ldots,\{s_k,t_k\}\}$ be a set of $k$ pairs of distinct vertices.
A nice partition of $G$ has a parity breaking matching for $(S,I)$
if and only if every nice partition has a parity breaking matching for $(S,I)$.
\end{corollary}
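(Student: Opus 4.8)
The plan is to obtain the corollary as a formal consequence of Theorem~\ref{parity linked fixed set} together with the observation recorded immediately before the statement. These two facts are, in effect, the two halves of the equivalence ``some nice partition has a parity breaking matching'' $\Leftrightarrow$ ``$G$ has the required $k$ disjoint paths of prescribable parity,'' and chaining them yields the claim.

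First I would dispose of the direction from right to left. Since a minimum odd cycle cover of $G$ always exists and every such cover induces a nice partition, $G$ possesses at least one nice partition. Hence if every nice partition admits a parity breaking matching for $(S,I)$, then in particular some nice partition does, which is exactly the asserted implication.

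For the substantive direction, suppose some nice partition $(A,B)$ of $G$ admits a parity breaking matching for $(S,I)$. Because $G$ is $(26k+24|I|)$-connected, Theorem~\ref{parity linked fixed set} applies and produces $k$ pairwise disjoint paths $P_1,\dots,P_k$ with $P_i$ joining $s_i$ and $t_i$, where for each $i\in I$ the parity of the length of $P_i$ may be prescribed arbitrarily. This property of $G$ no longer refers to any particular partition. I would then apply the observation preceding the corollary to an \emph{arbitrary} nice partition $(A',B')$: prescribe the parity of each $P_i$ with $i\in I$ to be even exactly when $|\{s_i,t_i\}\cap A'|=1$ and odd otherwise; a parity count along $P_i$ shows it must then use at least one edge $m_i\in E(G_{A',B'})$. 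Since the $P_i$ are pairwise disjoint, the edges $\{m_i\}_{i\in I}$ are disjoint and each $m_i$ avoids every terminal $s_j,t_j$ with $j\neq i$, so $\{m_i\}_{i\in I}$ is a parity breaking matching for $(S,I)$ with respect to $(A',B')$. As $(A',B')$ was an arbitrary nice partition, every nice partition has such a matching.

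The only point deserving minor care is the final verification that the chosen edges $m_i$ genuinely constitute a parity breaking matching, namely that they are disjoint and meet no other terminal; but both are immediate from the pairwise disjointness of the paths. I therefore expect no real obstacle here, the corollary being a direct combination of Theorem~\ref{parity linked fixed set} and the preceding observation rather than an independent argument.
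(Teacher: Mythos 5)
Your proposal is correct and is essentially the paper's own argument: the paper proves the corollary exactly by combining Theorem~\ref{parity linked fixed set} (some nice partition with a matching yields the prescribable-parity linkage) with the observation stated immediately before the corollary (the linkage yields a parity breaking matching in any partition, hence in every nice one). Your additional remarks -- existence of a nice partition for the trivial direction, and disjointness of the paths guaranteeing that the chosen edges form a matching avoiding the other terminals -- are exactly the details the paper leaves implicit.
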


There are plenty of consequences of Theorem \ref{parity linked fixed set}.
Firstly, it is easy to see that it implies Corollary~\ref{parity linked}.

\begin{corollary}\label{parity linked}
Let $k\in \mathbb{N}$ and let $G$ be a $50k$-connected graph.
Exactly one of the following two statements holds.
\begin{enumerate}[(i)]
	\item $G$ is $k$-parity linked.
	\item There is a set $S=\{\{s_1,t_1\},\ldots,\{s_k,t_k\}\}$ of $k$ pairs of distinct vertices such that
	for all nice partitions of $G$, there is no parity breaking matching for $S$ of size $k$.
\end{enumerate}
\end{corollary}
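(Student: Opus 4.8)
The plan is to deduce the corollary directly from Theorem~\ref{parity linked fixed set} together with the observation recorded just before Corollary~\ref{cor: nice partition}; no new combinatorial argument is required. The numerical point to keep in mind is that for full parity linkage one takes $I=[k]$, so that $|I|=k$ and the connectivity demanded by Theorem~\ref{parity linked fixed set}, namely $26k+24|I|=50k$, matches the hypothesis on $G$ exactly. I would split the proof into showing that at least one of (i) and (ii) holds and that they cannot both hold.

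For the first part I would argue the contrapositive: if (ii) fails, then (i) holds. So suppose that (ii) fails, meaning that for \emph{every} set $S=\{\{s_1,t_1\},\ldots,\{s_k,t_k\}\}$ of $k$ pairs of distinct vertices some nice partition of $G$ admits a parity breaking matching for $S$, i.e.\ for $(S,[k])$. To verify that $G$ is $k$-parity linked, fix an arbitrary such $S$ and an arbitrary prescription of parities for the indices $1,\ldots,k$. By assumption there is a nice partition $(A,B)$ carrying a parity breaking matching for $(S,[k])$; since $G$ is $50k$-connected, Theorem~\ref{parity linked fixed set} with $I=[k]$ produces disjoint paths $P_1,\ldots,P_k$ such that $P_i$ joins $s_i$ and $t_i$ with the prescribed parity for every $i\in[k]$. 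As $S$ and the parities were arbitrary, $G$ is $k$-parity linked and (i) holds.

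For the second part I would show that (i) and (ii) are mutually exclusive, using the observation before Corollary~\ref{cor: nice partition}. Assume (i) holds, fix any set $S$ and any nice partition $(A,B)$, and apply $k$-parity linkage to obtain disjoint $s_i,t_i$-paths $P_1,\ldots,P_k$ whose parities are chosen exactly as in that observation: even when precisely one endpoint of the pair lies in $A$, and odd otherwise. A short parity count then forces each $P_i$ to contain an edge $m_i\in E(G_{A,B})$; disjointness of the paths makes $\{m_i\}_{i\in[k]}$ a matching, and since $m_i\subseteq V(P_i)$ is disjoint from $V(P_j)$, which contains $s_j$ and $t_j$ for every $j\neq i$, the set $\{m_i\}_{i\in[k]}$ is a parity breaking matching for $S$. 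Thus every nice partition admits such a matching for every $S$, directly contradicting (ii); hence at most one of the two statements can hold.

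I do not expect a genuine obstacle, as all of the substance lies in Theorem~\ref{parity linked fixed set}, from which the corollary is pure bookkeeping. The only steps that merit a moment of care are checking that $\{m_i\}_{i\in[k]}$ is an honest matching meeting no $\{s_j,t_j\}$ with $j\neq i$ (immediate from disjointness of the paths) and the elementary parity computation that the chosen parity forces at least one edge of $E(G_{A,B})$ on each path; both are already essentially contained in the cited observation.
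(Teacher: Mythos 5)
Your proposal is correct and follows exactly the route the paper intends: the paper derives Corollary~\ref{parity linked} from Theorem~\ref{parity linked fixed set} (with $I=[k]$, so $26k+24|I|=50k$) for the direction $\neg(ii)\Rightarrow(i)$, and from the observation preceding Corollary~\ref{cor: nice partition} for mutual exclusivity, which is precisely your two-part argument. Both of your careful points (the parity count forcing an edge of $E(G_{A,B})$ on each path, and disjointness of the paths yielding a matching avoiding $\{s_j,t_j\}$ for $j\neq i$) match the paper's observation verbatim.
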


Secondly, later we deduce Theorem \ref{parity-k-linkage}.
The third consequence (Theorem \ref{parity linked vertex}) shows that the bound ``$4k-4$'' in Theorem \ref{parity-k-linkage} can be strengthened
to ``$2k-2$'' if $\{s_1,\ldots,s_k,t_1,\ldots,t_k\}$ is an independent set.
Note that both bounds ``$4k-4$'' and ``$2k-2$'' are best possible, respectively.
As a fourth consequence we prove the Erd\H{o}s-P\'osa property for odd $S$-cycles (Theorem \ref{EPoddScycles}) in Section 3.

We say that $G$ is \emph{parity-$k$-linked restricted to independent sets}
if for every independent set of $2k$ vertices $\{s_1,\ldots,s_k,t_1,\ldots,t_k\}$,
there are disjoint paths $P_1,\ldots,P_k$ such that $P_i$ connects $s_i$ and $t_i$
and we can choose whether the length of $P_i$ is odd or even.

\begin{theorem}\label{parity linked vertex}
Let $k\in \mathbb{N}$ and let $G$ be a $50k$-connected graph.
At least one of the following statements holds.
\begin{enumerate}[(i)]
	\item $G$ is parity-$k$-linked restricted to independent sets.
	\item There is a set $X$ of $2k-2$ vertices such that $G-X$ is bipartite.
\end{enumerate}
\end{theorem}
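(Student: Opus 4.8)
The plan is to derive Theorem~\ref{parity linked vertex} from Theorem~\ref{parity linked fixed set} (equivalently, from Corollary~\ref{parity linked}) by analyzing the obstruction for independent sets. Given a $50k$-connected graph $G$, I assume statement (i) fails; I must then produce a set $X$ of $2k-2$ vertices with $G-X$ bipartite. By Corollary~\ref{parity linked} applied with $I=[k]$, the failure of parity-$k$-linkage (here restricted to independent sets) yields a set $S=\{\{s_1,t_1\},\ldots,\{s_k,t_k\}\}$ of $k$ pairs of distinct vertices — which I will arrange to come from an independent set $\{s_1,\ldots,s_k,t_1,\ldots,t_k\}$ — such that \emph{no} nice partition of $G$ admits a parity breaking matching for $S$. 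Since $G$ is $50k$-connected it is certainly non-bipartite (for $k\geq 1$), so it has a nonempty minimum odd cycle cover $X_0$, which induces a nice partition $(A,B)$. The key point is to translate ``no parity breaking matching for $S$'' into a strong structural statement about the edges of $G_{A,B}$ and use it to bound $|X_0|$.

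Here is the crux of the argument I would carry out. A parity breaking matching for $S$ with respect to $(A,B)$ is a perfect matching $\{m_1,\ldots,m_k\}$ with $m_i\in E(G_{A,B})$ and $m_i$ avoiding all pairs $\{s_j,t_j\}$ with $j\neq i$. The nonexistence of such a matching, for a \emph{cleverly chosen} independent set $S$, should force $E(G_{A,B})$ to be concentrated on few vertices. Concretely, I would argue that every edge of $G_{A,B}$ (an edge within $A$ or within $B$ of the bipartition of $G-X_0$, together with the cover $X_0$) must be ``used up'' in a way controlled by $X_0$. The high connectivity lets me freely pick the $s_i,t_i$ on the two sides to create matching demands, so if $|X_0|$ were large I could find a parity breaking matching by a defect/Hall-type or greedy disjointness argument, using that edges of $G_{A,B}$ incident to $X_0$ are plentiful. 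Pushing this through should yield $|X_0|\leq 2k-2$. Then taking $X=X_0$ gives that $G-X$ is bipartite, establishing~(ii).

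The main obstacle, and where the ``$2k-2$'' improvement over the generic ``$4k-4$'' of Theorem~\ref{parity-k-linkage} comes from, is exploiting independence of $\{s_1,\ldots,s_k,t_1,\ldots,t_k\}$. When $S$ is independent, the $s_i$ and $t_i$ contribute no edges to $G_{A,B}$ themselves, so all of $E(G_{A,B})$ is available for matching edges $m_i$ subject only to the avoidance constraint $m_i\cap\{s_j,t_j\}=\emptyset$ for $j\neq i$. This removes the ``factor two'' that arises in the general case (where terminals may sit on the odd edges), and it is precisely this that should turn a cover bound of $4k-4$ into $2k-2$. I expect the delicate step to be verifying that the connectivity $50k$ is enough to both (a) realize the failure of (i) via an \emph{independent} terminal set and (b) run the matching-extraction argument; one needs enough room to reroute and to place terminals away from the sparse region carrying $E(G_{A,B})$.

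Finally, I would double-check tightness: the complete-bipartite-plus-clique example from the discussion after Theorem~\ref{parity-k-linkage}, but with the clique of size $2k-1$ deleted so that the terminals form an independent set, shows an arbitrarily connected graph that is not parity-$k$-linked restricted to independent sets yet whose smallest odd cycle cover has size exactly $2k-2$. This confirms that the bound in~(ii) cannot be lowered and that the reduction to~(i) versus~(ii) is the right dichotomy.
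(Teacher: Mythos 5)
Your overall strategy is the same as the paper's: by \eqref{vertex cover occ} the size of a minimum odd cycle cover equals $\tau(G_{A,B})$ for the induced nice partition, so by Theorem~\ref{parity linked fixed set} the whole theorem reduces to a purely combinatorial claim --- if $\tau(G_{A,B})\geq 2k-1$ and the $2k$ terminals form an independent set, then a parity breaking matching exists. (Your contrapositive phrasing, starting from the failure of (i), is logically the same reduction; note though that the terminal set is then \emph{given} adversarially by the failure of (i), so your remark that high connectivity lets you ``freely pick the $s_i,t_i$'' is backwards --- you must handle \emph{every} independent terminal set, and connectivity plays no role in the matching extraction.)

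The genuine gap is that this combinatorial claim, which is the entire content of the proof, is never established: you only assert that a ``defect/Hall-type or greedy disjointness argument'' should produce the matching. A greedy induction does work when $\tau(H)\geq 2k$: delete $x_k$, $y_k$ and a neighbor $u$ of $x_k$, note $\tau$ drops by at most $3$, recurse, and add the edge $x_ku$. But the critical case is exactly $\tau(H)=2k-1$, where deleting three vertices may leave $\tau=2k-4<2(k-1)-1$ and the induction breaks. The paper needs real work here: if $H-Z$ is edgeless it applies K\"onig's theorem to extract a matching of size $2k-1$ and prunes it down to $k$ admissible edges; otherwise it reduces to a $\tau$-critical subgraph $H'$ and invokes the Erd\H{o}s--Gallai theorem ($\tau$-critical graphs satisfy $\tau\geq |V|/2$) to derive a contradiction from the independence of $Z$, which forces $|V(H')|\geq 4k-1$ while $\tau(H')=2k-1$. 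Nothing in your proposal substitutes for this case analysis, so as written the proof does not go through. (Your closing tightness example is also slightly garbled --- the right construction keeps the $(2k-1)$-clique on one side and places the independent terminals on the other side, giving minimum odd cycle cover exactly $2k-2$ --- but that is a side issue; the missing matching lemma is the real defect.)
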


Next, we mention two results needed in the proof of Theorem \ref{parity linked fixed set}.
The first result is basically due to Mader and there is a slightly improved version for triangle-free graphs in~\cite{KR09}.
For a graph $G$, let $\delta(G)$ be the minimum degree of $G$. 

\begin{lemma}[Mader \cite{Mad72}]\label{connected2}
If $G$ is a graph such that $\delta(G)\geq 12k$,
then $G$ contains a $2k$-connected graph $H$
with at least $5k|E(H)|$ edges.
\end{lemma}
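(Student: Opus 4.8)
The plan is to locate $H$ as a vertex-minimal dense subgraph and to exploit minimality to force high connectivity. I would fix constants $\alpha,\beta$ (with $\alpha$ of order $6k$ and $\beta$ of order $\alpha(2k-1)$) and let $H$ be a subgraph of $G$ with the fewest vertices among all subgraphs $H'$ satisfying the affine edge bound $|E(H')|\geq \alpha|V(H')|-\beta$. Since $\delta(G)\geq 12k$ gives $|E(G)|\geq 6k|V(G)|$, the graph $G$ itself meets this bound for a suitable $\alpha\leq 6k$, so $H$ is well defined. The thresholds are to be chosen so that, once $|V(H)|$ is known to be large, the bound $|E(H)|\geq \alpha|V(H)|-\beta$ upgrades to the required $|E(H)|\geq 5k|V(H)|$; this density conclusion is the easy half. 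The substantive claim is that $H$ is $2k$-connected.

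For the connectivity, I would argue by contradiction. Assuming $|V(H)|>2k$, a failure of $2k$-connectivity gives a separator $T$ with $|T|\leq 2k-1$ and a partition $V(H)=A\cup T\cup B$ into nonempty parts with no edge between $A$ and $B$. Writing $H_A=H[A\cup T]$ and $H_B=H[B\cup T]$, both have strictly fewer vertices than $H$, so minimality yields $|E(H_A)|<\alpha|V(H_A)|-\beta$ and $|E(H_B)|<\alpha|V(H_B)|-\beta$. Every edge of $H$ lies in $H_A$ or $H_B$, with only the edges inside $T$ double-counted, so
\[
|E(H)|\leq |E(H_A)|+|E(H_B)|<\alpha\bigl(|V(H)|+|T|\bigr)-2\beta .
\]
Comparing with $|E(H)|\geq \alpha|V(H)|-\beta$ forces $\alpha|T|>\beta$, which contradicts $\beta\geq\alpha(2k-1)\geq\alpha|T|$. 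Hence $H$ is $2k$-connected, and the density bound then finishes the argument.

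The hard part is that the small and low-degree cases break the naive version of this scheme: with the given slack between average degree about $12k$ and the target density $5k$, a single choice of $\alpha,\beta$ cannot simultaneously be met by $G$ and rule out a tiny minimal $H$ (for which no separator of the required type exists and the density conclusion fails). The sharp form of the argument — Mader's — circumvents this by inducting on $|V(G)|$ and, in the separation step, augmenting each side $H_A,H_B$ by a clique on $T$ before recursing; the clique on at most $2k-1$ vertices contributes enough extra edges to keep the pieces above the inductive threshold, while one checks that a $2k$-connected subgraph produced inside an augmented piece does not genuinely depend on the added edges, since they span fewer than $2k$ vertices. I expect this bookkeeping — matching the loss incurred at a separator to the gap between $12k$ and $10k$, so that the connectivity extraction costs at most about $2k$ in the minimum degree — to be the crux; the density conclusion $|E(H)|\geq 5k|V(H)|$ follows immediately once $|V(H)|$ is large enough. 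Alternatively, one may simply invoke Mader's theorem for the existence of the $2k$-connected subgraph and observe that the same extremal counting delivers the edge bound.
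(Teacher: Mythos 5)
First, a point of context: the paper itself gives \emph{no} proof of this lemma --- it is an external citation to Mader \cite{Mad72}, with a remark that \cite{KR09} contains a variant for triangle-free graphs (also, the statement's ``$5k|E(H)|$'' is a typo for ``$5k|V(H)|$''). So your attempt has to stand on its own. The degenerate-case problem you flagged is real, but it is the easy one to fix, and the fix is not Mader's clique-augmentation induction: the standard rendition of the argument (see Diestel \cite{Die10}) restricts the candidate family to subgraphs with $|V(H')|\geq m$ for a suitable $m$ (e.g.\ $m=4k-2$) in addition to $|E(H')|>\alpha|V(H')|-\beta$, and first shows that a vertex-minimal candidate $H$ satisfies $\delta(H)>\alpha$ (otherwise delete a vertex of degree at most $\alpha$). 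That minimum-degree bound guarantees that both sides of any separation of order at most $2k-1$ have more than $\alpha+1\geq m$ vertices, so they are legitimate smaller candidates and your counting goes through verbatim.

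The genuine gap is quantitative, you did not flag it, and neither your sketch nor your fallback closes it. The hypothesis forces $\alpha\leq 6k$ (else $G$ itself need not qualify), and your separation count forces $\beta\geq\alpha(2k-1)$. The only lower bounds available on $n=|V(H)|$ are $n>\alpha$ (from the minimum-degree step) and the clique bound $\binom{n}{2}\geq|E(H)|>\alpha n-\beta$; with $\alpha=6k$ and $\beta=6k(2k-1)$ the latter only forces $n$ to be about $9.5k$, so the density guarantee is $|E(H)|/|V(H)|>\alpha-\beta/n\approx 4.7k$. Reaching $5k$ would require $n\geq 6(2k-1)$, which is \emph{not} forced; thus ``the density conclusion follows immediately once $|V(H)|$ is large enough'' assumes exactly the point that fails. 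Your alternative of invoking Mader off the shelf has the same defect: the standard form (average degree at least $4j$ yields a $(j+1)$-connected $H$ with $|E(H)|/|V(H)|>|E(G)|/|V(G)|-j$) applied with $j=2k-1$ gives density greater than $4k+1$, short of $5k$ for all $k\geq 2$. The stated constants do come out if $G$ is triangle-free: then $\binom{n}{2}$ is replaced by $n^{2}/4$, which forces $n\geq 12k+2\sqrt{36k^{2}-\beta}$ (about $21.8k$) and hence density $>6k-\beta/n>5k$. That is precisely the ``slightly improved version for triangle-free graphs'' of \cite{KR09} that the paper alludes to, and it is all the paper ever uses: Lemma~\ref{connected2} is applied only to the bipartite graph $G''$ in the proof of Theorem~\ref{parity linked fixed set}. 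So your scheme, repaired as above, does prove the triangle-free version the paper needs; but as a proof of the lemma as stated (general graphs, minimum degree $12k$, density $5k$) it falls quantitatively short --- with this counting one needs minimum degree roughly $12.5k$ for general graphs, and any proof of the literal statement must go beyond the argument you describe.
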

Using Theorem \ref{k-linkage2}, 
this implies that a graph $G$ with $\delta(G)\geq 12k$ has a subgraph which is $k$-linked.

Another result which is used in the proof of Theorem \ref{parity linked fixed set} is due to Geelen~et~al..
For a graph $G$ and a set of vertices $Z$,
a \emph{$Z$-path} is a path $P$ such that $V(P)\cap Z$ contains exactly the end vertices of $P$.

\begin{theorem}[Geelen~et~al.~\cite{GGRSV09}]\label{odd path hitting set}
For any set $Z$ of vertices of a graph $G$ and any positive integer $\ell$
at least one of the following statements holds.
\begin{enumerate}[(i)]
	\item There are $\ell$ disjoint odd $Z$-paths.
	\item There is a set $X$ of at most $2\ell-2$ vertices such that $G-X$ contains no odd $Z$-path.
\end{enumerate}
\end{theorem}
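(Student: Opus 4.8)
The statement is an Erd\H{o}s--P\'osa duality for odd $Z$-paths, and the clean way to see it is to reformulate parity as a group labelling and then prove a sharper packing--covering inequality. The plan is to label every edge of $G$ by $1\in\mathbb{Z}_2$, so that the $\mathbb{Z}_2$-value of any walk is the parity of its length; the odd $Z$-paths are then exactly the $Z$-paths of nonzero value. Writing $\nu$ for the maximum number of vertex-disjoint odd $Z$-paths and $\tau$ for the minimum number of vertices meeting every odd $Z$-path, I would prove the inequality $\tau\le 2\nu$, after which the theorem is immediate: if $\nu\ge\ell$ we are in case (i), and otherwise $\nu\le\ell-1$, whence $\tau\le 2\nu\le 2\ell-2$ and any minimum cover witnesses case (ii). The base case $Z=V(G)$ already explains the constant: every odd $Z$-path is a single edge, so $\nu$ is the matching number and $\tau$ the vertex-cover number, and the two endpoints of a maximum matching form a cover of size $2\nu$.

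The main tool would be the gauge (shifting) invariance of the labelling. For an interior vertex $v\in V(G)\setminus Z$ and $\gamma\in\mathbb{Z}_2$, add $\gamma$ to the label of every edge incident with $v$. A $Z$-path either avoids $v$ or passes through $v$ using exactly two incident edges, so its value changes by $2\gamma=0$; hence the family of odd $Z$-paths, and therefore both $\nu$ and $\tau$, are unchanged. Repeatedly shifting lets me assume a convenient normal form — for instance that the labels vanish on a fixed spanning tree, so that oddness of a $Z$-path is governed by which nonzero-labelled chords it uses. I would also preprocess $G$ without changing $\nu$ or $\tau$ by deleting vertices lying on no odd $Z$-path, deleting interior vertices of degree at most $1$, and suppressing interior vertices of degree $2$ (contracting one incident edge and transferring its label to the survivor, which preserves every $Z$-value); this makes every interior vertex have degree at least $3$.

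With these normalizations in hand I would argue by induction on $|V(G)|+|E(G)|$. Fix a maximum packing $\mathcal{P}$ of $\nu$ disjoint odd $Z$-paths using as few edges as possible, and examine the parity-reachability structure of $G$ from $Z$ relative to $\mathcal{P}$. The dichotomy to establish at each vertex or small separation is: either a local reduction (a deletion, a contraction, or a shift) yields a smaller instance with the same $\nu$ to which induction applies and whose cover lifts back, or the minimality of $\mathcal{P}$ exposes an augmenting configuration along an alternating odd walk, contradicting maximality. When neither augmentation nor reduction is available, the obstruction localizes on the paths of $\mathcal{P}$, and one reads off a cover by charging at most two vertices to each of the $\nu$ paths, exactly as in the matching base case.

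The hard part will be precisely this last charging: keeping the cover at $2\nu$ rather than at a larger multiple of $\nu$. Unlike in the known half-integral version, a single vertex cannot be allowed to account for many paths, so the argument must show that the non-augmentable residual structure admits a separator meeting each packing path in at most two vertices — the group-labelled analogue of the fact that every edge is covered by an endpoint of a maximum matching. Controlling this tight charging across $2$-separations, where two packing paths can interact, is where the genuine difficulty lies, and it is what forces the careful Gallai--Edmonds-type analysis of the nonzero-value structure underlying the proof of Geelen et al.
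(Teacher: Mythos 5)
There is no in-paper proof to compare you against: the paper imports this statement verbatim from Geelen, Gerards, Reed, Seymour and Vetta \cite{GGRSV09} and uses it as a black box in the proof of Theorem \ref{parity linked fixed set}. (In the literature it is exactly the $\mathbb{Z}_2$ specialization, with every edge labelled $1$ and $A=Z$, of the non-zero $A$-paths theorem of Chudnovsky, Geelen, Gerards, Goddyn, Lohman and Seymour, Combinatorica 26 (2006), 521--532.) So your attempt must stand on its own, and as it stands it does not: it is a correct framing plus a research plan, not a proof. The reduction to the inequality $\tau\le 2\nu$, the gauge invariance of parity under shifts at vertices of $V(G)\setminus Z$, and the base case $Z=V(G)$ are all fine. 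But the entire content of the theorem sits in the two steps you give only in outline: (a) the dichotomy ``either a $\nu$-preserving local reduction applies, or minimality of the packing yields an augmenting configuration,'' and (b) the charging argument producing a cover meeting each of the $\nu$ packing paths in at most two vertices. For (a) you never define what an augmenting structure for odd $Z$-paths is --- parity augmentation has no analogue of the symmetric-difference step from matching theory, since an alternating odd walk may revisit packing paths and self-intersect, and making this rigorous is a substantial part of any known proof. For (b) you explicitly concede that keeping the charge at two per path is ``where the genuine difficulty lies.'' Conceding the crux honestly does not discharge it: the theorem is assumed, not derived.

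There is also a concrete technical slip in your normalization. You claim one may shift so that ``the labels vanish on a fixed spanning tree,'' but shifts are only permitted at vertices of $V(G)\setminus Z$, so the parity of a tree path joining two vertices of $Z$ is gauge-invariant and in general cannot be zeroed out; the correct normal form is a spanning forest each of whose components contains at most one vertex of $Z$. If you want a complete argument rather than a program, the efficient route is the one the sources take: invoke the packing--covering theorem for non-zero $A$-paths in group-labelled graphs with $\Gamma=\mathbb{Z}_2$, all labels $1$, and $A=Z$, whose (nontrivial) proof supplies precisely the augment-or-cover analysis your sketch leaves open; this is how the bound $2\ell-2$, which you correctly identify as the tight constant already visible in the matching case, is actually achieved.
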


We proceed with the proof of Theorem~\ref{parity linked fixed set}.

\begin{proof}[Proof of Theorem \ref{parity linked fixed set}]

Let $(A,B)$ be a nice partition of $G$ with a parity breaking matching $M=\{m_i\}_{i\in I}$.
If an edge of $M$ covers a vertex of $\{s_i,t_i\}$, let $m_i=x_iy_i$ be this edge and choose $x_i,y_i$ such that $x_i=s_i$ or $y_i=t_i$.
Let $X$ be a minimum odd cycle cover of $G$ that induces the nice partition $(A,B)$.

Suppose first that $|X|<8k$.
By the definition of a nice partition and the fact that $G$ is $(26k+24|I|)$-connected, we know that
every vertex in $A\cap X$ has at least $9k$ neighbors in $B\setminus X$
and every vertex in $A\setminus X$ has at least $18k$ neighbors in $B\setminus X$.
Thus every vertex in $A$ has at least $9k$ neighbors in $B\setminus X$.
Let $T=\bigcup_{i=1}^k\{s_i,t_i\}\cup \bigcup_{i\in I}\{x_i,y_i\}$.
Hence every vertex in $A$ has at least $5k$ neighbors in $A\setminus (T\cup X)$.
The same holds vice versa for the vertices in $B$.

Therefore, we can find a set of at most $4k$ distinct vertices 
$$\bigcup_{i=1}^k\{s_i',t_i'\}\cup \bigcup_{i\in I}\{x_i',y_i'\}\subseteq 
V(G)\setminus (T \cup X)$$
such that $z'$ is a neighbor of $z$ for $z\in T$ (symbolically written)
and
exactly one vertex of the set $\{z,z'\}$ belongs to $A$.

Let $G'=G-(T\cup X)$.
Thus $G'$ is $24k$-connected and bipartite.
In addition, by Theorem \ref{k-linkage2}, we obtain that $G'$ is $2k$-linked.
 
Next, we define the desired disjoint paths $P_1,\ldots,P_k$
such that $P_i$ is a $s_i,t_i$-path in $G$ and for $i\in I$ the length of $P_i$ is of the prescribed parity.
In order to do so, we seek for disjoint paths $P_1',\ldots, P_k',P_1'',\ldots, P_k''$ in $G'$ with disjoint end vertices.
As $G'$ is $2k$-linked such disjoint paths exist. 

For every $i$ we proceed as follows.
If $i\notin I$, then let $P_i'$ be a $s_i',t_i'$-path in $G'$ and 
let $P_i$ be  the conjunction of $s_is_i'$, the path $P_i'$, and $t_i't_i$.
Suppose next that $i\in I$.
If our choice of the parity of length of $P_i$ shall respect the parity naturally given by the sides of the partition $(A,B)$,
then let $P_i'$ be a path connecting $s_i'$ and $t_i'$ in $G'$ and 
let $P_i$ be  the conjunction of $s_is_i'$, the path $P_i'$, and $t_i't_i$.
Otherwise,
let $P_i'$ be a path connecting $s_i'$ and $x_i'$ and $P_i''$ be a path connecting $t_i'$ and $y_i'$.
If $\{s_i,t_i\}\cap \{x_i,y_i\}=\emptyset$, then
let $P_i$ be the conjunction of $s_is_i'$, the path $P_i'$, the path $x_i'x_iy_iy_i'$, the path $P_i''$, and $t_i't_i$.
If $s_i=x_i$ and $t_i\not=y_i$,
then let $P_i$ be the conjunction of $s_iy_iy_i'$, the path $P_i''$, and $t_i't_i$.
If $s_i\not=x_i$ and $t_i=y_i$,
then let $P_i$ be the conjunction of $s_is_i'$, the path $P_i'$, and $x_i'x_it_i$.
Finally, if $s_i=x_i$ and $t_i=y_i$, then let $P_i= s_it_i$.
Thus, there are disjoint paths $P_1,\ldots, P_k$ as desired.


It remains to show that if $X$ has size at least $8k$, then the first statement holds.
This part of the proof can basically be found in \cite{KR09}.
However, we change some arguments which leads to a shorter proof.
Let $G'=G-\{s_1,\ldots,s_k,t_1,\ldots,t_k\}$ and let $(A',B')$ be a partition of $G'$ 
such that $|E(G_{A',B'}')|$ is minimized.
Define $G''=G'-E(G_{A',B'}')$.
Note that $\delta(G'')\geq 12(k+|I|)$ and $G''$ is bipartite.
By Lemma \ref{connected2},
there is a 
$2(k+|I|)$-connected subgraph $H$ of $G''$ with $|E(H)|\geq 5(k+|I|) |V(H)|$.
Moreover, by Theorem~\ref{k-linkage2},
the graph $H$ is $(k+|I|)$-linked.
Let $(A_H,B_H)$ be the (unique) bipartition of $H$ such that $A_H\subseteq A'$.
Observe that $|A_H|\geq 10k$, because $\delta(H)\geq 10k$.

Theorem \ref{odd path hitting set} guarantees a set $Y$ with $|Y|\leq 6k-6$ that intersects 
all odd $A_H$-paths in $G'$ or
$3k$ disjoint odd $A_H$-paths in $G'$.
Suppose that there is a set $Y$ of at most $6k-6$ vertices such that $G'-Y$ contains no odd $A_H$-path.
For a contradiction, we assume that $G'-Y$ is not bipartite.
Thus there is an odd cycle $C$ in $G'-Y$.
Since $G'$ is $24(k+|I|)k$-connected, $G'-Y$ is $2$-connected.
Hence there are two disjoint $A_H$-$C$-paths in $G'$.
Note that the length of these paths could be zero.
Nevertheless, combining these two paths with a suitable part of the cycle $C$ leads to an odd $A_H$-path, which is a contradiction.
This in turn implies that $S \cup Y$ is an odd cycle cover of $G$ of size at most $8k-6$, which is a contradiction to the assumption $|X|\geq 8k$.
Thus Theorem~\ref{odd path hitting set} implies the existence of $3k$ disjoint odd $A_H$-paths.

Let $P$ be one of these $3k$ odd $A_H$-path. 
There is a natural partition of $E(P)$ into $V(H)$-paths.
Because $P$ is an odd $A_H$-path, there is a subpath $P'$ of $P$ such that
$P'$ is an odd $H$-path and both end vertices of $P'$ lie in the same side of the bipartition of $H$
or $P'$ is an even $H$-path and exactly one end vertex of $P'$ lies in $A_H$.
To see this, assume for a contradiction that all subpaths are of odd length if exactly one end vertex lies in $A_H$
and of even length of both end vertices lie in $B_H$.
As there are either zero or two paths with exactly one end vertex in $A_H$,
the path $P$ has even length, which is a contradiction.	

Therefore, there is a set $\mathcal{Q}$ of $3k$ disjoint $H$-paths $Q_1,\ldots, Q_{3k}$ where the length of $Q_i$ is odd if both end vertices lie in the same side of the bipartition of $H$ and even otherwise. 

Since $G$ is $26k$-connected,
there is a set of $2k$ disjoint paths $\mathcal{P}=\{P_1,\ldots,P_{2k}\}$ connecting $\{s_1,\ldots,s_k,t_1,\ldots,t_k\}$ and $H$.
Choose these paths such that they intersect as few as possible paths from $\mathcal{Q}$.
Under this condition choose these paths such that their edge intersection with the paths in $\mathcal{Q}$ is as large as possible.
The latter condition implies that 
if $Q\in \mathcal{Q}$ has nonempty intersection with a path in $\mathcal{P}$ --
let $z', z''$ be the end vertices of $Q$, and
let $P$ be first path that intersects $Q$ seen from the direction of $z'$ --
then $P$ follows the path $Q$ up to $z'$ beside the case that $P$ is the only path intersecting $Q$, and $P$ follows $Q$ to $z''$.
Hence for every $Q\in \mathcal{Q}$ that intersects a path in $\mathcal{P}$,
there is at least one path $P\in \mathcal{P}$ such that there is vertex $z$ that is an end vertex of $P$ and $Q$.
Clearly, a path $P\in \mathcal{P}$ can only share its end vertex with one path in $\mathcal{Q}$.
Therefore, the paths in $\mathcal{P}$ intersect at most $2k$ paths in $\mathcal{Q}$
and hence there is a collection $\mathcal{Q'}=\{Q_1',\ldots,Q_{k}'\}\subseteq\mathcal{Q}$ of $k$ paths 
such that $Q\cap P =\emptyset$ for $P\in \mathcal{P}$ and $Q\in\mathcal{Q}'$.

Since $H$ is $(k+|I|)$-linked,
we can find the desired $k$ disjoint paths of specified parity connecting $s_i$ and $t_i$ for $i\in I$ by using the paths $\mathcal{P}$
and then either directly linking the ends in $H$ of the paths belonging to $s_i$ and $t_i$ (we also do this for all $i\notin I$) or by using the path $Q_i'$ in between.
\end{proof}

For a graph $G$, let a set of vertices $X$ of $G$ be a \emph{vertex cover} of $G$
if every edge is incident to at least one vertex of $X$.
Let the \emph{vertex cover number} $\tau(G)$ of $G$ be the least number $k$ such that $G$ has a vertex cover $X$ with $|X|=k$.
Since a vertex cover has to contain at least one vertex of every edge in a matching $M$,
we have on the one hand $|M|\leq \tau(G)$ for every matching $M$ in $G$.
On the other hand, we observe the following.
\begin{equation}\label{vertex cover matching}
\begin{minipage}[c]{0.8\textwidth}\em
If $M$ is a maximal matching of $G$,
then the vertices covered by $M$ form a vertex cover of $G$ and hence $\tau(G)\leq 2|M|$.
\end{minipage}\ignorespacesafterend 
\end{equation}
Trivially, $\tau(G-v)\geq \tau(G)-1$ for every graph $G$ and $v\in V(G)$,
since every vertex cover of $G-v$ together with $\{v\}$ is a vertex cover of $G$.

A graph $G$ is \emph{$\tau$-critical} if $\tau(G-e)< \tau(G)$ and $\tau(G-v)<\tau(G)$ for every edge $e \in E(G)$ and every vertex $v\in V(G)$.
A result of Erd\H{o}s and Gallai \cite{EG61} says that if $G$ is $\tau$-critical, then $\tau(G)\geq |V(G)|/2$.

We make another observation.
\begin{equation}\label{vertex cover occ}
\begin{minipage}[c]{0.8\textwidth}\em
If $G$ is a graph and $(A,B)$ is a nice partition of $G$ induced by the minimum odd cycle cover $X$,
then $|X|=\tau(G_{A,B})$.
\end{minipage}\ignorespacesafterend 
\end{equation}
This can be seen as follows. 
As $G-X$ is a bipartite graph with bipartition $(A\setminus X,B\setminus X)$,
the set $X$ is a vertex cover of $G_{A,B}$.
Thus $|X|\geq \tau (G_{A,B})$.
Suppose $Y$ is a vertex cover of $G_{A,B}$,
then $G-Y$ is a bipartite graph and hence $|X|\leq \tau (G_{A,B})$.

Having these definitions in mind we reprove Kawabarayashi's and Reed's result and directly afterwards Theorem~\ref{parity linked vertex}.

\begin{proof}[Proof of Theorem \ref{parity-k-linkage}]
Suppose that $X$ is a minimum odd cycle cover and $|X|\geq 4k-3$.
We show by induction on $k$ that $G$ contains a parity breaking matching for $S$.
Let $(A, B)$ be a nice partition induced by $X$. 
By \eqref{vertex cover occ}, $X$ is a minimum vertex cover of $G_{A,B}$.

We show that for every graph $H$, 
any set $\{\{x_1,y_1\},\ldots,\{x_k,y_k\}\}$ of $k$ pairs of distinct vertices in $H$ 
such that $\tau(H)\geq 4k-3$,
there is a matching $M=\{m_1,\ldots,m_k\}$ in $H$ such that $m_i\cap \{x_j,y_j\}=\emptyset$ for $i\neq j$.

Let $Y$ be a minimal vertex cover of $H$ and we prove the statement by induction on $k$.

Suppose $k=1$.
Since $\tau(H)\geq 1$, the graph $H$ must contain an edge $e$ and $\{e\}$ is the desired matching.
Hence we may assume that $k\geq 2$.
Because $2k<4k-3$, there is a vertex $r$ such that $r\in Y\setminus \{x_1,\ldots,x_k,y_1,\ldots,y_k\}$.
Since $|Y|=\tau(H)$, 
the vertex $r$ has a neighbor $r'$.
Note that either $r'\notin \{x_1,\ldots,x_k,y_1,\ldots,y_k\}$
or we may assume by symmetry that $r'=x_k$.
Observe that $\tau(H-\{x_k,y_k,r,r'\})\geq 4(k-1)-3$ and 
combining $rr'$ and the induction hypothesis for $H-\{s_k,t_k,r,r'\}$,
we conclude that $H$ contains a matching $M$ as desired.

Thus with $G_{A,B}$ playing the role of $H$ and $s_i,t_i$ playing the role of $x_i,y_i$,
the graph $G$ contains a parity breaking matching for $\{\{s_1,t_1\},\ldots,\{s_k,t_k\}\}$  and applying Theorem \ref{parity linked fixed set} completes the proof.
\end{proof}

\begin{proof}[Proof of Theorem \ref{parity linked vertex}]
Suppose that $X$ is a minimum odd cycle cover of $G$ and $|X|\geq 2k-1$.
Fix some set $S=\{\{s_1,t_1\},\ldots,\{s_k,t_k\}\}$ of $k$ pairs of distinct vertices which are independent in $G$.
Let $(A, B)$ be a nice partition induced by $X$.
By Theorem~\ref{parity linked fixed set} it suffices to show that $G$ contains a parity breaking matching for $S$.
Note that $X$ is a minimum vertex cover of $G_{A,B}$
and hence $\tau(G_{A,B})\geq 2k-1$.

We prove that following statement, which clearly completes the proof of Theorem~\ref{parity linked vertex}.
For every graph $H$ and every set
$\{\{x_1,y_1\},\ldots,\{x_k,y_k\}\}$ of $k$ pairs of vertices
such that $Z=\{x_1,\ldots,x_k,y_1,\ldots,y_k\}$ is an independent set of size $2k$
and $\tau(H)\geq 2k-1$,
there is a matching $M=\{m_1,\ldots,m_k\}$ in $H$ such that $m_i\cap \{x_j,y_j\}=\emptyset$ for $i\neq j$.

We proceed by induction on $k$.
If $k=1$, then $H$ contains an edge $e$ and $\{e\}$ is the desired matching.
Assume next that $k\geq 2$.
Suppose $H-Z$ contains no edges.
This implies that $H$ is bipartite with bipartition $(V(H)\setminus Z,Z)$.
By K\"onig's Theorem, the matching number of $H$ equals the vertex cover number and hence $H$ contains a matching $N$ of size $2k-1$.
Let $M$ be the matching obtained from $N$ by deleting one of the matching edges $x_ip$ and $y_iq$ if both exist in $N$.
Therefore, $|M|= k$ and $M$ is the desired matching.

In the following we may assume that $H-Z$ contains edges.
Suppose there is a vertex in $Z$ which is an isolated vertex in $H$, by symmetry, say $x_k$.
Let $e=uy_k$ be an edge of $H$ incident to $y_k$ if such an edge exists otherwise let $e=uv$ be some edge in $H-Z$.
As $x_k$ is an isolated vertex, 
$\tau(H-\{u,x_k,y_k\})\geq 2k-3$ if $e=uy_k$ and
$\tau(H-\{u,v,x_k,y_k\})\geq 2k-3$ if $e=uv$, since $y_k$ is also isolated.
By the induction hypothesis, 
there exists a matching $M'=\{m_1,\ldots,m_{k-1}\}$ such that $m_i\cap \{x_j,y_j\}=\emptyset$ for $i\neq j$.
Thus $M'\cup \{e\}$ is the desired matching.

Therefore, we may assume that every vertex in $Z$ has a neighbor.
We obtain a desired matching by induction if $\tau(H)\geq 2k$ by deleting $x_k$, $y_k$, and a neighbor of $x_k$.
Thus we may assume $\tau(H)=2k-1$.

Let $H'$ be the induced subgraph of $H$ which is obtained from $H$ by deleting all isolated vertices.
We may assume that the $\tau(H'-e)<\tau(H')$ for every $e\in E(H')$ and $\tau(H'-r)<\tau(H')$ for every $r\in V(H')\setminus Z$. 
Moreover, if $\tau(H'-x_i)=\tau(H')$,
then let $r$ be a neighbor of $y_i$ and the statement follows by induction because $\tau(H'-\{x_i,y_i,r\})\geq \tau(H')-2$.
This implies that $H'$ is a $\tau$-critical graph.

Since $Z$ is an independent set, 
complement of an independent set is a vertex cover, 
and $\tau(H')=2k-1$, 
we conclude that $|V(H')|\geq 2k-1 + 2k=4k-1$.
Thus $\tau(H')< |V(H')|/2$, which contradicts the theorem of Erd\H{o}s and Gallai mentioned before.
\end{proof}

\section{Odd cycles through prescribed vertices}

In this section we present several results concerning the Erd\H{o}s-P\'osa property of odd $S$-cycles in highly connected graphs.
This extends the results concerning the Erd\H{o}s-P\'osa property of odd cycles in highly connected graphs.
Furthermore, assuming a slightly higher connectivity,
we show how known results follow easily from Theorem \ref{parity linked fixed set}.

\begin{lemma}\label{lemma erdos posa}
Let $k\in \mathbb{N}$ and let $G$ be a $50k$-connected graph.
Let $S$ be a set of $k$ vertices
and suppose there is a nice partition $(A, B)$ of $G$
with a matching $M$  of size $k$ in $G_{A,B}$ such that every edge in $M$ covers at most one vertex of $S$.
Then $G$ contains $k$ disjoint odd $S$-cycles.
\end{lemma}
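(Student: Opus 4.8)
The plan is to reduce Lemma~\ref{lemma erdos posa} to an application of Theorem~\ref{parity linked fixed set} by encoding each desired odd $S$-cycle as a parity-constrained linkage problem. Write $S=\{v_1,\ldots,v_k\}$. The key idea is that an odd $S$-cycle through $v_i$ can be built from a single vertex $v_i$ by splitting it into two terminals: I would like to find, for each $i$, a cycle through $v_i$ of odd length, and $k$ of these pairwise disjoint. Since Theorem~\ref{parity linked fixed set} produces disjoint $s_i,t_i$-paths of prescribed parity, the natural move is to pick for each $v_i$ two distinct neighbors $a_i,b_i$ (which will serve as $s_i,t_i$) and ask for a path $P_i$ from $a_i$ to $b_i$; then $a_i v_i b_i$ together with $P_i$ closes up into a cycle through $v_i$, and by prescribing the parity of $P_i$ we can force this cycle to be odd. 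The matching $M$ of size $k$ in $G_{A,B}$, with each edge covering at most one vertex of $S$, is exactly what supplies the parity breaking matching needed to invoke the theorem.

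First I would set up the terminals carefully. For each $v_i\in S$, let $m_i=x_iy_i\in M$ be its matching edge; since $M\subseteq E(G_{A,B})$, both endpoints lie on the same side of the bipartition induced by the nice partition, and $m_i$ covers at most one vertex of $S$. I would choose the two terminals $s_i,t_i$ to be neighbors of $v_i$ lying on prescribed sides of $(A,B)$, arranged so that the two-edge path $s_i v_i t_i$ has a controlled parity contribution and so that the terminals, together with the $x_i,y_i$, are $2k$ (or as many as needed) distinct vertices avoiding $S\setminus\{v_i\}$; the $50k$-connectivity gives ample room to select these greedily since every relevant vertex has many neighbors on each side. The set $I$ is taken to be $[k]$, so that the parity of every $P_i$ is prescribable, and the invoked connectivity $26k+24|I|=50k$ matches the hypothesis exactly.

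The core of the argument is the parity bookkeeping: I must arrange that prescribing the parity of each $P_i$ forces the closed walk $s_i\,v_i\,t_i$ followed by $P_i$ to be a genuine odd cycle through $v_i$, and that these $k$ cycles are pairwise vertex-disjoint. Disjointness is immediate from the disjointness of the $P_i$ together with disjointness of terminals and of the $v_i$. For oddness, I would observe that the length of the cycle is (length of $P_i$) plus $2$, so an odd cycle requires an odd $P_i$; the parity of $P_i$ is exactly what Theorem~\ref{parity linked fixed set} lets me prescribe, using that $M$ restricted to the indices is a parity breaking matching for $(S,[k])$. The condition that each matching edge covers at most one vertex of $S$ is what guarantees the $m_i$ can simultaneously serve as the parity breaking matching while keeping $m_i\cap\{s_j,t_j\}=\emptyset$ for $j\neq i$.

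The main obstacle I expect is the simultaneous vertex-distinctness of all the auxiliary terminals $s_i,t_i$ together with the matching endpoints $x_i,y_i$ and the vertices of $S$, while still respecting the required side-of-partition constraints for the parity encoding. This is a counting/greedy-selection issue rather than a structural one: I would handle it by processing the $v_i$ one at a time and using that each vertex has at least linearly many ($\Omega(k)$) neighbors on the appropriate side after removing the $O(k)$ already-committed vertices, exactly as in the $|X|<8k$ branch of the proof of Theorem~\ref{parity linked fixed set}. Once all terminals are fixed and shown to be distinct, the application of Theorem~\ref{parity linked fixed set} with $I=[k]$ is direct and yields the $k$ disjoint odd $S$-cycles.
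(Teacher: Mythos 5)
Your reduction breaks at the disjointness claim. Theorem~\ref{parity linked fixed set} guarantees only that the paths $P_1,\ldots,P_k$ are disjoint \emph{from each other}; it gives no control over which non-terminal vertices they traverse. In your encoding the vertices $v_i\in S$ are not terminals --- the terminals are the neighbors $a_i,b_i$ --- so $P_j$ (for $j\neq i$) may pass through $v_i$, in which case the closed walks $a_iv_ib_i\cup P_i$ and $a_jv_jb_j\cup P_j$ share the vertex $v_i$; worse, $P_i$ itself may pass through $v_i$, in which case $a_iv_ib_i\cup P_i$ is not even a cycle. Your proposed greedy/counting fix only makes the \emph{terminals} distinct, which does nothing against this. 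Nor can you protect the $v_i$ by deleting or splitting them before invoking Theorem~\ref{parity linked fixed set}: its hypothesis --- a nice partition together with a parity breaking matching --- is tied to the specific graph, because nice partitions are defined via minimum odd cycle covers, and it does not transfer to $G-S$ or to any modified graph. (Symptomatically, in your encoding, where the terminals avoid $V(M)$ altogether, the hypothesis that each edge of $M$ covers at most one vertex of $S$ is never genuinely used; that hypothesis matters exactly when the $S$-vertices themselves serve as terminals.)

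This is precisely the difficulty the paper's proof is built around: it makes every $s_i\in S$ a \emph{terminal} of the linkage, so that vertex-disjointness of the paths really does yield disjointness of the cycles. Concretely, the paper adds for each $s_i$ a non-adjacent twin $s_i'$ with $N(s_i')=N(s_i)$, obtaining a graph $G'$, and seeks \emph{odd} $s_i$--$s_i'$ paths in $G'$; replacing the final edge $us_i'$ of such a path by $us_i$ yields an odd cycle through $s_i$. Since the nice partition $(A',B')$ of $G'$ need have nothing to do with $(A,B)$, the paper first applies Theorem~\ref{parity linked fixed set} in $G$ (with the given matching $M$ and pairs $\{s_i,t_i\}$ for fresh vertices $t_i$) to produce disjoint paths $Q_i$ whose parities are chosen to disagree with $(A',B')$; each $Q_i$ is then forced to contain an edge of $G_{A',B'}$, and these edges form the parity breaking matching needed for the second application, in $G'$, to the pairs $\{s_i,s_i'\}$. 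This transfer step is the real content of the proof and is absent from your proposal. A repair closer to your plan would be to use the pairs $\{v_i,a_i\}$ with a \emph{single} neighbor $a_i\notin S\cup V(M)$, prescribe \emph{even} parity, and close each path with the edge $v_ia_i$: then each $v_i$ is a terminal, disjointness is genuine, and $M$ (indexed so that an edge covering $v_i$ receives index $i$, which the at-most-one-vertex-of-$S$ hypothesis makes possible) is still a parity breaking matching for these pairs with respect to $(A,B)$.
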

Observe that every set of $k$ disjoint odd $S$ cycles
lead in any partition $(A,B)$ of $G$ to a matching $M$ of size $k$ in $G_{A,B}$ such that every edge in $M$ covers at most one vertex of $S$ (if $|S|=k$),
because every odd cycle in $G$ uses at least one edge in $G_{A,B}$.
\begin{proof}[Proof of Lemma~\ref{lemma erdos posa}]


Let $T=\{t_1,\ldots,t_k\}$ be a set of vertices distinct from $S=\{s_1,\ldots,s_k\}$ and distinct from the vertices covered by $M$.
By Theorem \ref{parity linked fixed set}, 
there are disjoint paths $P_1,\ldots,P_k$ with prescribed parity such that $P_i$ connects $s_i$ and $t_i$,
because $M$ is a parity breaking matching for $\{\{s_1,t_1\},\ldots,\{s_k,t_k\}\}$.


For every $s_i$, add to $G$ a vertex $s_i'$ such that $N(s_i)=N(s_i')$ and denote this new graph by $G'$.
Note that $G'$ is $50k$-connected.
Let $(A', B')$ be a nice partition of $G'$.
Let $Q_1,\ldots,Q_k$ be disjoint paths in $G$ such that $Q_i$ connects $s_i$ and $t_i$
and the parity of the length of $Q_i$ is odd if and only if $s_i$ and $t_i$ belong both to $A'$ or $B'$.
For every $i$, the path $Q_i$ uses at least one edge in $G_{A',B'}$.
Let $N$ be a collection of $k$ edges $e_1,\ldots,e_k$ in $G_{A',B'}$ such that $e_i\in E(Q_i)$.
Clearly, $N$ does not cover a vertex of the set $\{s_1',\ldots,s_k'\}$.
Thus $N$ is a parity breaking matching for $\{\{s_1,s_1'\},\ldots,\{s_k,s_k'\}\}$ in $G'$.
By Theorem \ref{parity linked fixed set}, there are disjoint paths $P_1',\ldots,P_k'$ of odd length
where $P_i'$ joins $s_i$ and $s_i'$.

Since $N(s_i)=N(s_i')$,
this in turn implies the existence of $k$ disjoint odd $S$-cycles $C_1,\ldots, C_k$ in $G$ where $C_i$ contains $s_i$. 
\end{proof}

After having proved Lemma \ref{lemma erdos posa},
it is not difficult to prove the Erd\H{o}s-P\'osa property of odd $S$-cycles in highly connected graphs.

\begin{theorem}\label{S-cycles}
Let $k\in \mathbb{N}$ and let $G$ be a $50k$-connected graph.
Let $S$ be a set of vertices.
At least one of the following statements holds.
\begin{enumerate}
	\item $G$ contains $k$ disjoint odd $S$-cycles.
	\item There is a set $X$ with $|X|=2k-2$ such that $G-X$ does not contain an $S$-cycle.
\end{enumerate}
\end{theorem}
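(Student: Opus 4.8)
The plan is to reduce to a single core case and then manufacture the cycles through Theorem~\ref{parity linked fixed set}. First I would fix a minimum odd cycle cover $X_0$ of $G$ and the nice partition $(A,B)$ it induces, so that $\tau(G_{A,B})=|X_0|$ by~\eqref{vertex cover occ}. If $|X_0|\le 2k-2$, then $G-X_0$ is bipartite and so contains no odd cycle, in particular no odd $S$-cycle, and the second conclusion holds with $X=X_0$. If $|S|\le 2k-2$, then $G-S$ has no vertex of $S$ and hence no odd $S$-cycle, so the second conclusion holds with $X=S$ (padded arbitrarily to size $2k-2$). Thus I may assume $|X_0|\ge 2k-1$ and $|S|\ge 2k-1$, and I aim for the first conclusion, namely $k$ disjoint odd $S$-cycles.

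From $\tau(G_{A,B})=|X_0|\ge 2k-1$ and $\tau\le 2\nu$ I get $\nu(G_{A,B})\ge k$, so I fix a matching $M_0=\{e_1,\dots,e_k\}$ of $G_{A,B}$ of size $k$. Call $e_i$ \emph{bad} if both of its endpoints lie in $S$, and let $b$ be the number of bad edges. Since the bad edges use $2b$ distinct vertices of $S$, we have $2b\le|S|$, hence $b\le\lfloor|S|/2\rfloor\le|S|-k$, where the last inequality uses $|S|\ge 2k-1$. This bound $b\le|S|-k$ is the heart of the matter: it is exactly what allows me to attach to the $k$ edges of $M_0$ a set of $k+b$ pairwise distinct representative vertices of $S$.

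I would then run the strategy of Lemma~\ref{lemma erdos posa}, but mixing two kinds of cycles. Each bad edge $e_i=u_iv_i$ carries one cycle through the two $S$-vertices $u_i,v_i$: I take the pair $\{u_i,v_i\}$ and prescribe \emph{even} parity, so that the resulting $u_i$--$v_i$ path closed by the edge $u_iv_i$ is an odd cycle through $u_i$ and $v_i$. Each non-bad edge carries one cycle through a single representative $s\in S$ — its unique $S$-endpoint if it has one, and otherwise a fresh vertex of $S\setminus V(M_0)$; here I double $s$ to a twin $s'$ with $N(s')=N(s)$ as in the proof of Lemma~\ref{lemma erdos posa}, take the pair $\{s,s'\}$ and prescribe \emph{odd} parity, so that after identifying $s'$ with $s$ the path closes to an odd cycle through $s$. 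The representatives are exactly the $k+b$ vertices just described, and $b\le|S|-k$ is precisely what guarantees that they can be chosen pairwise distinct and that the fresh ones can be found off $M_0$.

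Working in the graph $G'$ obtained by adding all these twins on the same side of $(A,B)$ as their originals, $X_0$ remains a minimum odd cycle cover and $(A,B)$ the induced nice partition, and under it $M_0$ is a parity breaking matching for the chosen $k$ pairs: each $e_i$ is monochromatic and, being a matching edge, avoids every pair but its own, a bad edge being allowed to meet its own pair. Since doubling preserves $50k$-connectivity and $26k+24k=50k$, Theorem~\ref{parity linked fixed set} with $I=[k]$ supplies disjoint paths of the prescribed parities, and the closures above produce $k$ disjoint odd $S$-cycles. The step I expect to be the main obstacle is precisely the honest bookkeeping behind $b\le|S|-k$: verifying that the $2b$ bad endpoints, the single $S$-endpoints, and the fresh vertices are genuinely pairwise distinct and disjoint from the twins, that $M_0$ is a valid parity breaking matching for the extended nice partition of $G'$ (so that Theorem~\ref{parity linked fixed set} and Corollary~\ref{cor: nice partition} really apply), and, above all, that after identifying twins the routed paths close up to \emph{vertex-disjoint simple} odd cycles rather than merely to closed walks.
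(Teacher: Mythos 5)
Your reduction to the case $|X_0|\ge 2k-1$, $|S|\ge 2k-1$, the extraction of $M_0$, and the counting $b\le |S|-k$ are all correct, and up to that point you run parallel to the paper (which, instead of keeping bad edges, simply selects $k$ vertices $S'\subseteq S$ meeting each edge of $M_0$ at most once and then invokes Lemma~\ref{lemma erdos posa} as a black box). The genuine gap is the sentence asserting that in $G'$ ``$X_0$ remains a minimum odd cycle cover and $(A,B)$ the induced nice partition, and under it $M_0$ is a parity breaking matching.'' This claim is false in general, and it is not a bookkeeping detail. By \eqref{vertex cover occ}, $X_0$ is a vertex cover of $G_{A,B}$, so \emph{every} edge of $M_0$ has an endpoint in $X_0$; moreover nothing prevents representatives from lying in $X_0$ (indeed $S$ may be entirely contained in $X_0$: take a large complete bipartite graph with a clique of size $4k$ added inside one side, $X_0$ essentially the clique, and $S$ inside the clique). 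Now every vertex $s$ of a \emph{minimum} odd cycle cover has neighbors in both $A\setminus X_0$ and $B\setminus X_0$ (otherwise $X_0\setminus\{s\}$ would still be a cover), so if a representative $s$ lies in $X_0$, its twin $s'$ (which is \emph{not} in $X_0$) has neighbors on both sides: the extended partition is not even a bipartition of $G'-X_0$, and in the example above $G'-X_0$ contains a triangle, so $X_0$ is not an odd cycle cover of $G'$ at all. Consequently you have no nice partition of $G'$ to which Theorem~\ref{parity linked fixed set} (or Corollary~\ref{cor: nice partition}) can be applied with the matching $M_0$, and this is exactly the obstacle you flagged at the end but did not resolve.

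This difficulty is precisely what the paper's proof of Lemma~\ref{lemma erdos posa} is designed to circumvent, by a two-step use of Theorem~\ref{parity linked fixed set}. First the theorem is applied in $G$ itself, where $(A,B)$ \emph{is} a nice partition and $M_0$ \emph{is} a parity breaking matching for the pairs $\{s_i,t_i\}$ (with auxiliary vertices $t_i$); this shows these pairs can be linked with arbitrarily prescribed parities. Then one takes an \emph{arbitrary} nice partition $(A',B')$ of the doubled graph $G'$ and chooses, inside $G$, disjoint paths $Q_i$ whose parities are prescribed (relative to the sides of $(A',B')$) so that each $Q_i$ is forced to contain an edge of $G_{A',B'}$; these edges form a parity breaking matching for the twin pairs $\{s_i,s_i'\}$ with respect to the genuine nice partition $(A',B')$, and only then is the theorem applied in $G'$ to obtain the odd $s_i$--$s_i'$ paths that close into odd cycles. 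To repair your argument, either splice this two-step construction into your scheme (it is compatible with your even-parity treatment of the bad pairs $\{u_i,v_i\}$, which need no twins), or drop the bad-edge machinery altogether and, as the paper does, pick $S'\subseteq S$ with $|S'|=k$ meeting each edge of $M_0$ at most once (possible since $|S|\ge 2k-1$) so that Lemma~\ref{lemma erdos posa} applies verbatim.
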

\begin{proof}
We may assume that $|S|\geq 2k-1$, otherwise the statement is trivial.
Let $X$ be a minimum odd cycle cover of $G$. 
We may assume that $|X|\geq 2k-1$.
Let $(A, B)$ be nice partition of $G$ induced by $X$.
Thus $\tau(G_{A,B})\geq 2k-1$, by \eqref{vertex cover occ}.
By \eqref{vertex cover matching},
the graph $G_{A,B}$ contains a matching $M$ of size $k$.
Let $S'\subseteq S$ be a set of $k$ vertices such that no edge in $M$ covers two vertices in $S'$.
Since an edge covers at most two vertices, $S'$ exists.
Using Lemma \ref{lemma erdos posa}, 
there are $k$ disjoint odd $S'$-cycles in $G$
and hence $k$ disjoint odd $S$-cycles in $G$.
\end{proof}

If a graph $G$ is $50k$-connected and $G$ does not contain $k$ disjoint odd cycles because of the trivial reason that $|S|\leq k-1$,
then $G$ is even almost bipartite.

\begin{theorem}\label{S-cycles vertex}
Let $k\in \mathbb{N}$ and let $G$ be a $50k$-connected graph.
Let $S$ be a set of at least $k$ vertices.
At least one of the following statements holds.
\begin{enumerate}
	\item $G$ contains $k$ disjoint odd $S$-cycles.
	\item There is a set $X$ with $|X|=\min_{S'\subseteq S, |S'|=k}\{2k-2+\tau(G[S'])\}$ such that $G-X$ is bipartite.
\end{enumerate}
\end{theorem}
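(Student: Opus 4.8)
The plan is to prove the contrapositive-flavoured version: if $G$ contains no $k$ disjoint odd $S$-cycles, then a minimum odd cycle cover already has size at most $\min_{S'\subseteq S,\,|S'|=k}\{2k-2+\tau(G[S'])\}$. First I would take a minimum odd cycle cover $X$ of $G$ together with the nice partition $(A,B)$ it induces; by \eqref{vertex cover occ} we have $|X|=\tau(G_{A,B})$, and of course $G-X$ is bipartite. Since any superset of $X$ still leaves the graph bipartite, it suffices to bound $\tau(G_{A,B})$ from above by the claimed minimum and then pad $X$ up to the exact size (the $50k$-connectivity guarantees $G$ has enough vertices to do so). This mirrors exactly the machinery already set up for Theorem~\ref{S-cycles}.

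The heart of the argument is a per-$S'$ estimate. Fix any $S'\subseteq S$ with $|S'|=k$. Because $S'\subseteq S$, any family of $k$ disjoint odd $S'$-cycles would in particular be a family of $k$ disjoint odd $S$-cycles; hence by assumption $G$ has no $k$ disjoint odd $S'$-cycles. Applying the contrapositive of Lemma~\ref{lemma erdos posa} to $S'$ (which has exactly $k$ vertices), no nice partition of $G$---in particular not $(A,B)$---admits a matching of size $k$ in $G_{A,B}$ all of whose edges meet $S'$ in at most one vertex. Let $H$ be the graph obtained from $G_{A,B}$ by deleting every edge having both endpoints in $S'$; then the matchings of $H$ are exactly the matchings of $G_{A,B}$ that meet $S'$ in at most one vertex per edge, so $H$ has no matching of size $k$, i.e.\ $\nu(H)\leq k-1$. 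By \eqref{vertex cover matching} a maximum (hence maximal) matching of $H$ yields a vertex cover of $H$ of size at most $2(k-1)$, so $\tau(H)\leq 2k-2$.

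Finally I would reassemble a vertex cover of $G_{A,B}$: take a vertex cover of $H$ of size at most $2k-2$ together with a minimum vertex cover of $G[S']$. Every edge of $G_{A,B}$ with at most one endpoint in $S'$ survives in $H$ and is covered by the first set, while every edge with both endpoints in $S'$ is an edge of $G[S']$ and is covered by the second; hence $\tau(G_{A,B})\leq 2k-2+\tau(G[S'])$. As $S'$ was arbitrary, $|X|=\tau(G_{A,B})\leq \min_{S'\subseteq S,\,|S'|=k}\{2k-2+\tau(G[S'])\}$, and padding $X$ to equality yields the second alternative. The only real obstacle is the bookkeeping in this last step: correctly matching the ``at most one vertex of $S'$ per edge'' condition of Lemma~\ref{lemma erdos posa} to the deletion of the edges inside $S'$, and observing that covering precisely those deleted edges costs exactly $\tau(G[S'])$. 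Everything else reduces to the matching-versus-vertex-cover identities \eqref{vertex cover matching} and \eqref{vertex cover occ} already in hand.
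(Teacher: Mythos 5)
Your proof is correct and takes essentially the same approach as the paper: both hinge on Lemma~\ref{lemma erdos posa} applied to a size-$k$ subset $S'\subseteq S$, together with the identities \eqref{vertex cover occ} and \eqref{vertex cover matching} relating the minimum odd cycle cover to $\tau(G_{A,B})$. The only difference is organizational: the paper argues forward (assume $|X|\geq 2k-1+\tau(G[S'])$, delete a minimum vertex cover $Y$ of $G[S']$ from $G_{A,B}$, and extract a matching of size $k$ avoiding double-covering $S'$), whereas you argue the contrapositive (delete the edges inside $S'$, bound the matching number by $k-1$, and reassemble a small vertex cover of $G_{A,B}$) --- these are dual bookkeepings of the same counting argument.
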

\begin{proof}
Let $X$ be a minimum odd cycle cover
and let $S'\subseteq S$ such that $|S'|=k$ and $\tau(G[S'])=\min_{S'\subseteq S, |S''|=k} \tau(G[S''])$.
We may assume that $|X|\geq 2k-1+\tau(G[S'])$.
Let $(A, B)$ be nice partition of $G$ and 
let $Y$ be a minimum vertex cover of $G[S']$.
Since $X$ is a minimum vertex cover of $G_{A,B}$ by \eqref{vertex cover occ},
we conclude $\tau(G_{A,B}-Y)\geq 2k-1$.
Note that $S'-Y$ is an independent set in $G_{A,B}-Y$.
Thus by using (\ref{vertex cover matching}), 
this in turn implies the existence of a matching $M$ of size $k$ in $G_{A,B}-Y$ 
such that every edge in $M$ covers at most one vertex of $S'$.
Using Lemma \ref{lemma erdos posa}, this implies the existence of $k$ disjoint odd $S'$-cycles in $G$.
\end{proof}

Let $\tau_k(G[S])=\min_{S'\subseteq S, |S''|=k} \tau(G[S''])$.
Note that $\tau_k(G[S])\leq k-1$ and thus $2k-2+\tau_k(G[S]) \leq 3k-3$.
Moreover, the bound ``$2k-2+\tau_k(G[S])$'' is sharp for every possible value of $\tau_k(G[S])$ no matter how large the
connectivity of $G$ is.
To see this, let $G$ arise from a large complete bipartite graph with bipartition $(A,B)$ by
adding the edges of a clique on $2k-1$ vertices to $A$ and the edges of a clique on $\tau$ vertices to $B$ for some $1\leq\tau\leq k$.
Let $S$ be a set of $k$ vertices in $B$ containing the $\tau$-clique.
Hence $\tau(G[S])=\tau-1$, there do not exist $k$ disjoint odd $S$-cycles, 
and there is no set $X$ of $2k-3+\tau(G[S])$ vertices such that $G-X$ is bipartite.

We proceed with a proof of Theorem~\ref{EPoddScycles}.
\begin{proof}[Proof of Theorem~\ref{EPoddScycles}]
Theorem~\ref{S-cycles} proves the first part of the statement
and if $|S|\geq k$,
then the observation $2k-2+\tau_k(G[S]) \leq 3k-3$ together with Theorem~\ref{S-cycles vertex} proves the second part of the statement.
\end{proof}

We conclude the paper with two results about disjoint odd cycles; that is, odd $S$-cycles with $S=V(G)$.

\begin{corollary}
Let $k\in \mathbb{N}$ and let $G$ be a $50k$-connected graph.
Exactly one of the following statements holds.
\begin{enumerate}
	\item $G$ contains $k$ disjoint odd cycles.
	\item For every nice partition $(A, B)$, the graph $G_{A,B}$ does not contain a matching of size~$k$.
\end{enumerate}
\end{corollary}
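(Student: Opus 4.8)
The plan is to establish the two implications ``$\neg(2)\Rightarrow(1)$'' and ``$(1)\Rightarrow\neg(2)$'', which together yield that exactly one of the two statements holds. Almost all the machinery is already available: the forward direction is a single application of Lemma~\ref{lemma erdos posa} in the spirit of Theorem~\ref{S-cycles}, and the reverse direction is the elementary parity observation recorded immediately after Lemma~\ref{lemma erdos posa}. This is the special case $S=V(G)$, so every odd cycle is automatically an odd $S$-cycle, which is what lets the two directions line up cleanly.

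First I would prove ``$\neg(2)\Rightarrow(1)$''. Assuming statement (2) fails, there is a nice partition $(A,B)$ for which $G_{A,B}$ contains a matching $M=\{m_1,\dots,m_k\}$ of size $k$. Selecting one endpoint from each edge $m_i$ gives a set $S'$ of exactly $k$ vertices such that every edge of $M$ covers at most one vertex of $S'$ (in fact exactly one). Then $S'$, together with $(A,B)$ and $M$, satisfies the hypotheses of Lemma~\ref{lemma erdos posa}, which produces $k$ disjoint odd $S'$-cycles. Since each of these is in particular an odd cycle, statement (1) holds.

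Next I would prove ``$(1)\Rightarrow\neg(2)$''. Suppose $G$ contains $k$ disjoint odd cycles $C_1,\dots,C_k$, and let $(A,B)$ be an arbitrary nice partition. As $C_i$ is odd it cannot be properly $2$-coloured by the bipartition $(A,B)$, so some edge $e_i$ of $C_i$ lies inside $A$ or inside $B$, i.e.\ $e_i\in E(G_{A,B})$. Because the cycles are vertex-disjoint, the edges $e_1,\dots,e_k$ form a matching of size $k$ in $G_{A,B}$. Hence \emph{every} nice partition admits such a matching, which is precisely the negation of statement (2). Combining the two implications gives $(1)\Leftrightarrow\neg(2)$, so exactly one of the statements holds.

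I do not expect a genuine obstacle here; the corollary is essentially a packaging of earlier results. The only points needing a little care are the choice of $S'$ so that the hypothesis ``at most one vertex of $S'$ per matching edge'' in Lemma~\ref{lemma erdos posa} is met, and the parity argument that an odd cycle must use an edge of $G_{A,B}$. A minor sanity check worth recording is that a nice partition exists at all (every minimum odd cycle cover induces one), so the universal quantifier ``for every nice partition'' in statement (2) ranges over a nonempty family and the equivalence is not vacuous.
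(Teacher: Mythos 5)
Your proof is correct. The direction ``(1) $\Rightarrow \neg$(2)'' is the same parity observation the paper dismisses with ``clearly''. In the direction ``$\neg$(2) $\Rightarrow$ (1)'', however, you take a genuinely different route. The paper distinguishes two cases: if $G$ has an independent set $I$ of size $k$, then every edge of $M$ covers at most one vertex of $I$ and the machinery of Lemma~\ref{lemma erdos posa} gives $k$ disjoint odd $I$-cycles; if $G$ has no such independent set, the paper abandons the linkage machinery entirely and instead packs $k$ disjoint triangles greedily (the neighbourhood of any vertex must span an edge, and $(50k-3i)$-connectivity survives $i$ triangle deletions). Your choice of $S'$ --- one endpoint from each edge of $M$ --- makes this case distinction unnecessary: Lemma~\ref{lemma erdos posa} does not require $S$ to be independent, only that no edge of $M$ covers two vertices of $S$, which your $S'$ satisfies by construction since $M$ is a matching (each $m_i$ covers exactly one vertex of $S'$, and $m_i\cap\{s_j,t_j\}=\emptyset$ for $j\neq i$ because matching edges are disjoint and the $t_j$ avoid $V(M)$). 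This is essentially the same trick the paper itself uses in the proof of Theorem~\ref{S-cycles}, so your argument is arguably the more natural and shorter one; the only thing the paper's case split buys is the side observation that a $50k$-connected graph with no independent set of size $k$ contains $k$ disjoint triangles outright, with no parity-linkage input at all. Your closing check that nice partitions always exist (so that statement (2) is not vacuously true and the equivalence really yields ``exactly one'') is a point the paper leaves implicit, and it is right to record it.
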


\begin{proof}
If the first statement holds, then the second does clearly not hold.

Suppose that the second statement does not hold.
Let $(A,B)$ be a nice partition of $G$
and let $M$ be a matching of size $k$ in $G_{A,B}$.
If $G$ contains an independent set $I$ of size $k$,
then every edge in $M$ covers at most one vertex in $I$.
By Theorem~\ref{S-cycles vertex}, 
the graph $G$ contains $k$ disjoint odd $I$-cycles.
Thus we may assume that $G$ does not contain an independent set of size $k$.
We claim that $G$ contains $k$ disjoint triangles.
This can be seen as follows.
Select a vertex $u$ in $G$.
As its neighborhood does not contain an independent set of size $k$,
it contains an edge $vw$.
Delete the triangle $uvwu$ from $G$ and iterate this process $k$ times.
As $G$ is $50k$-connected,
the graph after the $i$-th iteration is still $(50k-3i)$-connected.
\end{proof}

The following corollary is already proven by Thomassen \cite{Tho01} and Rautenbach and Reed \cite{RR01} with a higher connectivity bound.
Later Kawarabayashi and Reed \cite{KR09} and Kawarabayashi and Wollan \cite{KW06} improved this bound to $24k$ and $\frac{31}{2}k$, respectively.

\begin{corollary}
Let $k\in \mathbb{N}$ and let $G$ be a $50k$-connected graph.
At least one of the following statements holds.
\begin{enumerate}
	\item $G$ contains $k$ disjoint odd cycles.
	\item $G$ contains a set $X$ of $2k-2$ vertices such that $G-X$ is bipartite.
\end{enumerate}
\end{corollary}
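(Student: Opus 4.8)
The final statement is the classical Erdős–Pósa-type result for odd cycles in highly connected graphs: a $50k$-connected graph either contains $k$ disjoint odd cycles or admits a set $X$ of $2k-2$ vertices whose removal makes the graph bipartite. My plan is to derive this as an immediate consequence of the machinery already built, specifically Theorem~\ref{S-cycles vertex} with the choice $S=V(G)$.

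First I would set $S=V(G)$, so that an odd $S$-cycle is simply an odd cycle. I want to apply Theorem~\ref{S-cycles vertex}, which requires $|S|\geq k$; since $G$ is $50k$-connected it certainly has at least $k$ vertices (indeed at least $50k+1$), so this hypothesis is satisfied as long as $k\geq 1$. Theorem~\ref{S-cycles vertex} then gives one of two outcomes: either $G$ contains $k$ disjoint odd $S$-cycles, which are exactly $k$ disjoint odd cycles and yield statement~(i); or there is a set $X$ with $|X|=\min_{S'\subseteq S,\,|S'|=k}\{2k-2+\tau(G[S'])\}$ such that $G-X$ is bipartite.

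The one point that needs care is bounding the size of $X$ in the second case. I would argue that when $S=V(G)$, we can choose $S'$ to be an independent set of size $k$, forcing $\tau(G[S'])=0$ and hence $|X|=2k-2$. This is where the high connectivity is used: a $50k$-connected graph on many vertices must contain a large independent set, or failing that, one can produce $k$ disjoint triangles directly—mirroring the endgame of the preceding corollary. Concretely, if $G$ has no independent set of size $k$, then repeatedly picking a vertex $u$ and an edge inside its neighborhood (which exists because the neighborhood, having size at least $50k$, is not independent) yields $k$ disjoint triangles after $k$ iterations, since the connectivity only drops by $3$ per step and stays positive; this directly gives statement~(i). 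Otherwise, an independent $S'$ of size $k$ exists, $\tau(G[S'])=0$, and the minimum in Theorem~\ref{S-cycles vertex} is at most $2k-2$, giving statement~(ii).

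The main obstacle, such as it is, is purely bookkeeping: reconciling the two possible sources of statement~(i) (the odd $S$-cycles from Theorem~\ref{S-cycles vertex} versus the disjoint triangles from the no-large-independent-set case) into a single clean case split, and verifying that the independent-set dichotomy genuinely reduces the cover bound from the generic $3k-3$ down to the sharp $2k-2$. Since all the hard work—the parity-linkage argument and the Erdős–Gallai estimate—is already packaged in Theorem~\ref{parity linked fixed set} and Theorem~\ref{S-cycles vertex}, I expect the entire proof to be short, essentially the two-paragraph dichotomy sketched above.
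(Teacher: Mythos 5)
Your proof is correct; the difference from the paper is one of routing rather than substance. The paper's own proof never mentions independent sets at this point: it assumes statement (ii) fails, so every minimum odd cycle cover $X$ has size at least $2k-1$; by \eqref{vertex cover occ} such an $X$ is a minimum vertex cover of $G_{A,B}$ for the nice partition it induces, by \eqref{vertex cover matching} the graph $G_{A,B}$ therefore contains a matching of size $k$, and since this holds for every nice partition, the preceding corollary (a matching of size $k$ in a nice partition forces $k$ disjoint odd cycles) yields statement (i). You instead apply Theorem~\ref{S-cycles vertex} directly with $S=V(G)$ and dispose of the $\tau$-term through a dichotomy: either $G$ contains an independent set of size $k$, in which case $\min_{S'}\{2k-2+\tau(G[S'])\}=2k-2$ and the theorem gives (i) or (ii); or $G$ has no independent set of size $k$, in which case the greedy triangle construction gives (i) outright. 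That dichotomy, including the triangle argument, is exactly what the paper placed inside the proof of its \emph{preceding} corollary, so the two proofs share all essential content; yours inlines the independent-set/triangle split and delegates the cover-size bookkeeping to Theorem~\ref{S-cycles vertex}, while the paper does the bookkeeping by hand via vertex covers of $G_{A,B}$ and delegates the split to the preceding corollary. One detail worth tightening: in the triangle iteration you say the connectivity ``stays positive'', but what the argument actually needs is that it stays at least $k$, so that each successive neighborhood still has at least $k$ vertices and hence, having no independent set of size $k$, contains an edge; this does hold, since $50k-3(k-1)=47k+3\geq k$.
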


\begin{proof}
Suppose the second statement does not hold.
Let $(A, B)$ be a nice partition of $G$ induced by a minimum odd cycle cover $X$.
By \eqref{vertex cover occ}, $X$ is a minimum vertex cover of $G_{A,B}$.
Since $|X|\geq 2k-1$ by our assumption,
(\ref{vertex cover matching}) implies that
the graph $G_{A,B}$ contains a matching of size $k$.
Because this holds for every minimum odd cycle cover and so for every nice partition of $G$, 
the statement follows from the previous corollary.
\end{proof}

Clearly, assuming $50k$-connectivity in our results is not the best bound in terms of $k$ one can hope for.
However, it is essentially best possible in the sense that as one can easily construct graphs
that show that linear connectivity in $k$ is necessary.
It would be interesting to know which connectivity is needed to ensure that our results hold.
It even seems possible that the approach via a parity-$k$-linkage theorem cannot lead to the best connectivity bound.

\bibliographystyle{amsplain}
\bibliography{erdosposa}

\vfill

\small
\vskip2mm plus 1fill
\noindent
Version \today{}
\bigbreak

\noindent
Felix Joos
{\tt <felix.joos@uni-ulm.de>}\\
Institut f\"ur Optimierung und Operations Research\\
Universit\"at Ulm, Ulm\\
Germany

\end{document}